\author{Liran Shaul}
\address{Fakult\"at f\"ur Mathematik\\ 
Universit\"at Bielefeld\\ 
33501 Bielefeld\\ 
Germany.}
\email{LShaul@math.uni-bielefeld.de}
\newtheorem{thm}[equation]{Theorem}
\newtheorem{cor}[equation]{Corollary}
\newtheorem{prop}[equation]{Proposition}
\newtheorem{lem}[equation]{Lemma}
\theoremstyle{definition}
\newtheorem{rem}[equation]{Remark}
\newtheorem{thmx}{Theorem}
\newcommand{\iso}{\xrightarrow{\simeq}}
\newcommand{\inj}{\hookrightarrow}
\newcommand{\opn}{\operatorname}
\newcommand{\cat}[1]{\operatorname{\mathsf{#1}}}
\newcommand{\mfrak}[1]{\mathfrak{#1}}
\newcommand{\mrm}[1]{\mathrm{#1}}
\newcommand{\mbb}[1]{\mathbb{#1}}
\newcommand{\K}{\mbb{K} \hspace{0.05em}}
\renewcommand{\a}{\mfrak{a}}
\newcommand{\injdim}{\operatorname{inj\,dim}}
\newcommand{\flatdim}{\operatorname{fl\,dim}}
\newcommand{\projdim}{\operatorname{proj\,dim}}
\newcommand{\amp}{\operatorname{amp}}
\newcommand{\Ext}{\operatorname{Ext}}
\newcommand{\Tor}{\operatorname{Tor}}
\newcommand{\RHom}{\mrm{R}\opn{Hom}}
\newcommand{\RGamma}{\mrm{R}\Gamma}
\numberwithin{equation}{section} 
\thanks{{\em Mathematics Subject Classification} 2010:
13D05, 13D45, 13B35, 16E45\\
Keywords: Local cohomology, Derived completion, Homological dimension, Commutative DG-rings}
\begin{document}

\title[Homological dimensions of local (co)homology]{Homological dimensions of local (co)homology over commutative DG-rings}

\begin{abstract}
Let $A$ be a commutative noetherian ring,
let $\a\subseteq A$ be an ideal,
and let $I$ be an injective $A$-module.
A basic result in the structure theory of injective modules states that
the $A$-module $\Gamma_{\a}(I)$ consisting of $\a$-torsion elements is also an injective $A$-module. 
Recently, de Jong proved a dual result: If $F$ is a flat $A$-module,
then the $\a$-adic completion of $F$ is also a flat $A$-module.
In this paper we generalize these facts to commutative noetherian DG-rings:
let $A$ be a commutative non-positive DG-ring such that $\mrm{H}^0(A)$ is a noetherian ring,
and for each $i<0$, the $\mrm{H}^0(A)$-module $\mrm{H}^i(A)$ is finitely generated.
Given an ideal $\bar{\a} \subseteq \mrm{H}^0(A)$, 
we show that the local cohomology functor $\mrm{R}\Gamma_{\bar{\a}}$ associated to $\bar{\a}$ does not increase injective dimension.
Dually, the derived $\bar{\a}$-adic completion functor $\mrm{L}\Lambda_{\bar{\a}}$ does not increase flat dimension.
\end{abstract}
\maketitle

\section{Introduction}

\subsection{Torsion of injective modules}

Let $A$ be a commutative noetherian ring,
and $\a\subseteq A$ an ideal.
The $\a$-torsion functor is the functor $\Gamma_{\a}(M) := \varinjlim \opn{Hom}_A(A/\a^n,M)$
which maps an $A$-module $M$ to its submodule consisting $\a$-torsion elements.
An important consequence of Matlis' structure theory of injective modules is the following result:

\begin{thmx}\label{thm:classical}
Let $A$ be a commutative noetherian ring,
let $\a\subseteq A$ be an ideal,
and let $J$ be an injective $A$-module.
Then $\Gamma_{\a}(J)$ is also an injective $A$-module.
\end{thmx}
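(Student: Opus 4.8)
The plan is to reduce to the case of a single indecomposable injective and then invoke Matlis' structure theory, which is available since $A$ is noetherian. First I would record that $\Gamma_{\a}$ commutes with arbitrary direct sums: because $A$ is noetherian, each $\a^n$ is finitely generated, so $\opn{Hom}_A(A/\a^n,-)$ commutes with direct sums, and filtered colimits commute with direct sums, whence $\Gamma_{\a}(\boplus_i M_i)\cong\boplus_i\Gamma_{\a}(M_i)$. By Matlis' theorem the injective module $J$ decomposes as $J\cong\boplus_{\mfrak{p}\in\opn{Spec} A} E(A/\mfrak{p})^{(\mu_{\mfrak{p}})}$ for suitable cardinals $\mu_{\mfrak{p}}$, so it suffices to prove the statement when $J=E(A/\mfrak{p})$ for a single prime $\mfrak{p}$.

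Next I would split into two cases according to whether $\a\subseteq\mfrak{p}$. If $\a\subseteq\mfrak{p}$, then since $E(A/\mfrak{p})$ is $\mfrak{p}$-torsion (a standard consequence of Matlis theory) and $\a^n\subseteq\mfrak{p}^n$ for all $n$, every element of $E(A/\mfrak{p})$ is annihilated by a power of $\a$; hence $\Gamma_{\a}(E(A/\mfrak{p}))=E(A/\mfrak{p})$, which is injective by hypothesis. If $\a\not\subseteq\mfrak{p}$, then $\a^n\not\subseteq\mfrak{p}$ for every $n$ because $\mfrak{p}$ is prime, so I may choose $s\in\a^n\setminus\mfrak{p}$. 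As $E(A/\mfrak{p})$ is naturally an $A_{\mfrak{p}}$-module, multiplication by $s$ is bijective on it; therefore any $x\in E(A/\mfrak{p})$ killed by $\a^n$ is killed by $s$ and hence is $0$. This shows $\opn{Hom}_A(A/\a^n,E(A/\mfrak{p}))=0$ for all $n$, so $\Gamma_{\a}(E(A/\mfrak{p}))=0$, again injective.

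Finally I would reassemble: combining the two cases with the fact that $\Gamma_{\a}$ commutes with direct sums gives $\Gamma_{\a}(J)\cong\boplus_{\mfrak{p}\supseteq\a} E(A/\mfrak{p})^{(\mu_{\mfrak{p}})}$, a direct sum of injective $A$-modules. Since $A$ is noetherian, an arbitrary direct sum of injective $A$-modules is again injective, which finishes the argument. I do not anticipate a serious obstacle here; the proof rests entirely on the two points where noetherianness enters — Matlis' decomposition of injectives and the stability of injectivity under direct sums — together with the elementary computation of $\Gamma_{\a}$ on each $E(A/\mfrak{p})$. The real interest is that this clean template is exactly what one will have to replace, over a DG-ring, by something that survives the absence of a literal "direct sum of indecomposables" decomposition.
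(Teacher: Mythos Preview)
Your argument is correct, and there is nothing to compare against: the paper does not prove Theorem~\ref{thm:classical} but merely records it as a classical consequence of Matlis' structure theory of injective modules. Your proof is exactly the standard Matlis-theoretic argument the paper is alluding to---decompose $J$ into indecomposable injectives $E(A/\mfrak{p})$, compute $\Gamma_{\a}$ on each summand according to whether $\a\subseteq\mfrak{p}$, and use that direct sums of injectives are injective over a noetherian ring---so your proposal supplies precisely the omitted details.
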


This basic result depends on $A$ being noetherian.
It is false in general if $A$ is not noetherian,
even if $\a$ is finitely generated (see Remark \ref{exa:fails} below).

We denote by $\cat{D}(A)$ the unbounded derived category of $A$-modules,
and by $\cat{D}^{\mrm{b}}(A)$ its full triangulated subcategory of complexes with bounded cohomology.
For $M \in \cat{D}(A)$, its injective dimension, denoted by $\injdim_A(M)$ was defined in \cite[Definition 2.1.I]{AF}.
The functor $\Gamma_{\a}$ has a right derived functor
\[
\mrm{R}\Gamma_{\a} : \cat{D}(A) \to \cat{D}(A)
\]
It is calculated using K-injective resolutions.
An immediate corollary of Theorem \ref{thm:classical} is the following result:

\begin{cor}\label{cor:inj}
Let $A$ be a commutative noetherian ring,
and let $\a\subseteq A$ be an ideal.
Then for any $M \in \cat{D}^{\mrm{b}}(A)$, we have the following inequality:
\[
\injdim_A(\mrm{R}\Gamma_{\a}(M)) \le \injdim_A(M).
\]
\end{cor}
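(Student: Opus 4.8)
The plan is to deduce the corollary directly from Theorem~\ref{thm:classical} by running it through a bounded injective resolution of $M$. If $\injdim_A(M) = \infty$ there is nothing to prove, so set $d := \injdim_A(M) < \infty$. Because $M$ lies in $\cat{D}^{\mrm{b}}(A)$, its cohomology is bounded, and hence (by the characterization of injective dimension underlying \cite[Definition 2.1.I]{AF}) $M$ is isomorphic in $\cat{D}(A)$ to a complex $I = (I^j)_{j\in\mbb{Z}}$ of injective $A$-modules with $I^j = 0$ for every $j > d$; boundedness below of $\mrm{H}(M)$ forces $I^j = 0$ for $j \ll 0$ as well, so $I$ is a genuinely bounded complex.

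Next I would compute $\RGamma_{\a}(M)$ using this resolution. A bounded below complex of injective $A$-modules is K-injective, so $\RGamma_{\a}(M)$ is represented by the complex $\Gamma_{\a}(I)$ obtained by applying the torsion functor in each degree. By Theorem~\ref{thm:classical}, each term $\Gamma_{\a}(I^j)$ is again an injective $A$-module, and $\Gamma_{\a}(I^j) = 0$ whenever $I^j = 0$; in particular $\Gamma_{\a}(I^j) = 0$ for all $j > d$. Thus $\RGamma_{\a}(M)$ is isomorphic in $\cat{D}(A)$ to a complex of injective $A$-modules concentrated in cohomological degrees $\le d$.

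Finally, I would invoke the elementary half of the dimension estimate: any object of $\cat{D}(A)$ admitting a representative by injective modules vanishing in all degrees $> d$ has injective dimension at most $d$. Applied to $\RGamma_{\a}(M)$ this yields $\injdim_A(\RGamma_{\a}(M)) \le d = \injdim_A(M)$, as claimed.

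This argument is genuinely routine once Theorem~\ref{thm:classical} is available, and I do not anticipate a real obstacle. The two points that require a little care are: (i) using the full hypothesis $M \in \cat{D}^{\mrm{b}}(A)$ rather than mere boundedness above, so that the chosen injective resolution is an honestly bounded complex and ``apply $\Gamma_{\a}$ termwise'' is a legitimate computation of $\RGamma_{\a}$ that stays within bounded complexes; and (ii) aligning the cohomological grading conventions with those of \cite{AF}, so that the resulting inequality comes out as exactly $\injdim_A(\RGamma_{\a}(M)) \le \injdim_A(M)$ with no spurious degree shift.
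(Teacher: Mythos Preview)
Your proposal is correct and is precisely the spelling-out of what the paper calls an ``immediate corollary'' of Theorem~\ref{thm:classical}: choose a bounded injective resolution, apply $\Gamma_{\a}$ termwise using K-injectivity, and invoke Theorem~\ref{thm:classical} in each degree. There is nothing to add; the paper gives no further argument beyond declaring the result immediate.
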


We now switch our attention to commutative non-positive noetherian DG-rings.
The definition of a commutative DG-ring is recalled in Section \ref{sec:cdg} below.
Just like commutative rings represent affine schemes, 
commutative DG-rings represent affine derived schemes.
Given a commutative DG-ring $A$,
the fact that $A^i = 0$ for $i>0$ implies that
$\mrm{H}^0(A)$ is a commutative ring.
Following \cite{Ye2}, we set 
\[
\bar{A} := \mrm{H}^0(A).
\]
A commutative DG-ring $A$ is called noetherian if the commutative ring $\bar{A}$ is noetherian,
and $\mrm{H}^i(A)$ is a finitely generated $\bar{A}$-module for every $i<0$.
We will denote by $\cat{D}(A)$ the unbounded derived category of DG-modules over $A$,
and by $\cat{D}^{\mrm{b}}(A)$ its full triangulated subcategory of DG-modules with bounded cohomology.

Commutative noetherian DG-rings arise naturally in algebraic geometry:
If $\K$ is a commutative noetherian ring, 
and $A,B$ are finite type $\K$-algebras then $A\otimes^{\mrm{L}}_{\K} B$ is a commutative noetherian DG-ring,
and if $\K$ is not a field, it is often cannot be represented using ordinary commutative rings.

Let $A$ be a commutative DG-ring,
and let $\bar{\a} \subseteq \bar{A}$ be a finitely generated ideal.
In our recent paper \cite{Sh} we introduced a triangulated functor
\[
\mrm{R}\Gamma_{\bar{\a}}: \cat{D}(A) \to \cat{D}(A)
\]
called the right derived $\bar{\a}$-torsion or local cohomology at $\bar{\a}$ functor.
When $A$ is a commutative noetherian ring, this functor coincides with the usual local cohomology functor.
The definition of $\mrm{R}\Gamma_{\bar{\a}}$ as well as an explicit construction of it using the telescope complex is recalled in Section \ref{seclc} below.
To any $M \in \cat{D}(A)$ we can associate its injective dimension, denoted by $\injdim_A(M)$. 
The definition is recalled in Section \ref{sec:hdim}. 

The first main result of this paper generalizes Corollary \ref{cor:inj} to commutative noetherian DG-rings:

\begin{thm}\label{thm:mainA}
Let $A$ be a commutative noetherian DG-ring,
and let $\bar{\a} \subseteq \mrm{H}^0(A)$ be an ideal.
Then for any $M \in \cat{D}^{\mrm{b}}(A)$, there is an inequality
\[
\injdim_A \left(\mrm{R}\Gamma_{\bar{\a}}(M)\right) \le \injdim_A (M).
\] 
\end{thm}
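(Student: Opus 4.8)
The plan is to run the argument underlying Corollary~\ref{cor:inj}, but with injective resolutions replaced by a cohomological criterion for injective dimension. One may assume that $n := \injdim_A(M)$ is finite, since otherwise there is nothing to prove. I will invoke the characterization of injective dimension recalled in Section~\ref{sec:hdim}: for $X \in \cat{D}^{\mrm{b}}(A)$ one has $\injdim_A(X) \le n$ if and only if $\mrm{H}^i \RHom_A(\bar{A}/\bar{\mfrak p}, X) = 0$ for every $\bar{\mfrak p} \in \opn{Spec}(\bar{A})$ and every $i > n$ (equivalently, one may test against arbitrary finitely generated $\bar{A}$-modules). Since $\RGamma_{\bar{\a}}$ is computed by tensoring over $A$ with the bounded telescope complex, $\RGamma_{\bar{\a}}(M)$ again lies in $\cat{D}^{\mrm{b}}(A)$, so the task is to prove that $\mrm{H}^i \RHom_A(\bar{A}/\bar{\mfrak p}, \RGamma_{\bar{\a}}(M)) = 0$ for all $\bar{\mfrak p}$ and all $i > n$; this I would do case by case, according to whether $\bar{\a} \subseteq \bar{\mfrak p}$ or not.

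Suppose first that $\bar{\a} \subseteq \bar{\mfrak p}$. Then $\bar{A}/\bar{\mfrak p}$ is an $\bar{\a}$-torsion $A$-module, hence, as $A$ is noetherian, a derived $\bar{\a}$-torsion DG-module. Since $\RGamma_{\bar{\a}}$ is a colocalization of $\cat{D}(A)$ onto the derived $\bar{\a}$-torsion DG-modules (recalled in Section~\ref{seclc}), the counit $\RGamma_{\bar{\a}}(M) \to M$ induces an isomorphism $\RHom_A(\bar{A}/\bar{\mfrak p}, \RGamma_{\bar{\a}}(M)) \iso \RHom_A(\bar{A}/\bar{\mfrak p}, M)$, and the required vanishing in degrees $i > n$ is then immediate from $\injdim_A(M) \le n$.

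Now suppose $\bar{\a} \not\subseteq \bar{\mfrak p}$; here I would dispose of all such primes at once, by noetherian induction on the closed set $V(\bar{\mfrak p}) \subseteq \opn{Spec}(\bar{A})$. Choose $a \in \bar{\a} \setminus \bar{\mfrak p}$, a nonzerodivisor on $\bar{A}/\bar{\mfrak p}$, so that there is a short exact sequence $0 \to \bar{A}/\bar{\mfrak p} \xrightarrow{a} \bar{A}/\bar{\mfrak p} \to \bar{A}/(\bar{\mfrak p} + a\bar{A}) \to 0$ of $A$-modules. Applying $\RHom_A(-, \RGamma_{\bar{\a}}(M))$ produces a long exact sequence in which, for each $i$, multiplication by $a$ on $\mrm{H}^i \RHom_A(\bar{A}/\bar{\mfrak p}, \RGamma_{\bar{\a}}(M))$ is flanked by $\mrm{H}^i$ and $\mrm{H}^{i+1}$ of $\RHom_A(\bar{A}/(\bar{\mfrak p} + a\bar{A}), \RGamma_{\bar{\a}}(M))$; as $\bar{A}/(\bar{\mfrak p} + a\bar{A})$ admits a finite filtration by modules $\bar{A}/\bar{\mfrak q}$ with $V(\bar{\mfrak q}) \subseteq V(\bar{\mfrak p} + a\bar{A}) \subsetneq V(\bar{\mfrak p})$, the inductive hypothesis kills both flanking groups for $i > n$, so multiplication by $a$ is bijective on $\mrm{H}^i \RHom_A(\bar{A}/\bar{\mfrak p}, \RGamma_{\bar{\a}}(M))$ for every $i > n$. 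On the other hand, $\RGamma_{\bar{\a}}(M)$ lies in the essential image of the colocalization $\RGamma_{\bar{\a}}$, hence is derived $\bar{\a}$-torsion, so over the noetherian ring $\bar{A}$ every cohomology module $\mrm{H}^j(\RGamma_{\bar{\a}}(M))$ is $\bar{\a}$-power-torsion; propagating this through the truncation filtration of $\RGamma_{\bar{\a}}(M)$ and through a pseudo-coherent resolution of $\bar{A}/\bar{\mfrak p}$ over $A$ — available because $\bar{A}/\bar{\mfrak p} \in \cat{D}^{-}_{\mrm{f}}(A)$ — shows that each $\mrm{H}^i \RHom_A(\bar{A}/\bar{\mfrak p}, \RGamma_{\bar{\a}}(M))$ is itself an $\bar{\a}$-power-torsion $\bar{A}$-module, and in particular is killed by a power of $a$. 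A module on which $a$ acts both bijectively and locally nilpotently is zero, so $\mrm{H}^i \RHom_A(\bar{A}/\bar{\mfrak p}, \RGamma_{\bar{\a}}(M)) = 0$ for all $i > n$, which closes the induction and, together with the first case, completes the proof.

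The step I expect to be the main obstacle is the passage, in the last paragraph, from ``$\RGamma_{\bar{\a}}(M)$ is derived $\bar{\a}$-torsion'' to ``the groups $\mrm{H}^i \RHom_A(\bar{A}/\bar{\mfrak p}, \RGamma_{\bar{\a}}(M))$ are $\bar{\a}$-power-torsion over $\bar{A}$''. This relies on several torsion-theoretic facts over noetherian DG-rings that must be put in place first: that a classically $\bar{\a}$-torsion module is derived $\bar{\a}$-torsion, that a derived $\bar{\a}$-torsion DG-module has $\bar{\a}$-power-torsion cohomology modules, and that $\bar{A}/\bar{\mfrak p}$ is pseudo-coherent over $A$; it also relies on having the cohomological characterization of $\injdim_A$ from Section~\ref{sec:hdim} in exactly the form used above. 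Granting these, the long exact sequence manipulation and the noetherian induction are routine.
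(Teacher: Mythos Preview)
Your argument is essentially correct, but it is organized differently from the paper and implicitly relies on the same two structural lemmas that the paper isolates and proves first.

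The paper's proof is a clean reduction to the classical ring case (Corollary~\ref{cor:inj}): it establishes $\injdim_A(M)=\injdim_{\bar A}\bigl(\RHom_A(\bar A,M)\bigr)$ (Theorem~\ref{thm:injDG}), proves the commutation $\RGamma^{\bar A}_{\bar\a}\RHom_A(\bar A,M)\cong\RHom_A(\bar A,\RGamma^{A}_{\bar\a}M)$ (Lemma~\ref{lem:rgammaofrhom}), and then simply invokes Corollary~\ref{cor:inj} over the noetherian ring $\bar A$. Your route instead keeps everything over $A$: you test $\injdim_A$ against the cyclic $\bar A$-modules $\bar A/\bar{\mfrak p}$, split into the cases $\bar\a\subseteq\bar{\mfrak p}$ and $\bar\a\not\subseteq\bar{\mfrak p}$, and run a noetherian induction that in effect reproves Corollary~\ref{cor:inj} inside the DG setting rather than quoting it. This is a legitimate alternative; the colocalization step in your first case and the ``multiplication by $a$ is bijective but locally nilpotent'' trick in your second case are exactly the classical arguments.

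Two points deserve care. First, the criterion ``$\injdim_A(X)\le n$ iff $\Ext^i_A(\bar A/\bar{\mfrak p},X)=0$ for all $\bar{\mfrak p}$ and $i>n$'' is not what Section~\ref{sec:hdim} says; the definition there tests against \emph{all} bounded DG-modules, and reducing to cyclic $\bar A$-modules is precisely the content of Theorem~\ref{thm:injDG} (combined with the classical criterion over the noetherian ring $\bar A$). So you are tacitly using Theorem~\ref{thm:injDG}. Second, your ``main obstacle''---showing that $\RHom_A(\bar A/\bar{\mfrak p},\RGamma_{\bar\a}M)$ has $\bar\a$-torsion cohomology---is most cleanly handled not by truncations and an ad hoc pseudo-coherent resolution, but by the tensor--evaluation isomorphism (Proposition~\ref{prop:eval}(1)) applied with $K$ the telescope DG-module, exactly as in the proof of Lemma~\ref{lem:rgammaofrhom}; via the adjunction $\RHom_A(\bar A/\bar{\mfrak p},-)\cong\RHom_{\bar A}(\bar A/\bar{\mfrak p},\RHom_A(\bar A,-))$ this step is in fact Lemma~\ref{lem:rgammaofrhom} in disguise. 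So your two flagged ingredients are the paper's Theorem~\ref{thm:injDG} and Lemma~\ref{lem:rgammaofrhom}, after which the paper finishes in one line by citing Corollary~\ref{cor:inj}, whereas you re-derive that corollary by hand.
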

We will prove this result in Theorem \ref{thm:main} below.

\subsection{Completion of flat modules}

Given a commutative ring $A$ and a finitely generated ideal $\a\subseteq A$,
the $\a$-adic completion functor is the functor $\Lambda_{\a}(M) := \varprojlim A/{\a}^n \otimes_A M$.
This construction is dual to the $\a$-torsion functor.
This is best demonstrated by the Greenlees-May duality(\cite{GM}), which states that if $A$ is noetherian then the derived functors
of $\a$-torsion and $\a$-adic completion are adjoint to each other.
This intimate connection between $\a$-torsion and $\a$-adic completion raises the question: 
is there a dual result to Theorem \ref{thm:classical} for the $\a$-adic completion functor?
We have the following result:

\begin{thmx}\label{thm:b}
Let $A$ be a commutative noetherian ring,
let $\a\subseteq A$ be an ideal,
and let $F$ be a flat $A$-module.
Then $\Lambda_{\a}(F)$ is also a flat $A$-module.
\end{thmx}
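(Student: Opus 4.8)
The plan is to check flatness of $\Lambda_{\a}(F)$ through the $\Tor$-criterion: since $A$ is noetherian, $\Lambda_{\a}(F)$ is flat over $A$ as soon as $\Tor_1^A(A/J,\Lambda_{\a}(F))=0$ for every ideal $J\subseteq A$. So I fix $J$, pick a resolution $P_{\bullet}\to A/J$ by finitely generated free $A$-modules, and study $\Tor_i^A(A/J,\Lambda_{\a}(F))=\mrm{H}_i\bigl(P_{\bullet}\otimes_A\Lambda_{\a}(F)\bigr)$. Because each $P_k$ is finitely generated free, $P_k\otimes_A\Lambda_{\a}(F)=\varprojlim_n P_k\otimes_A F/\a^nF$, so $P_{\bullet}\otimes_A\Lambda_{\a}(F)=\varprojlim_n C^{(n)}$ where $C^{(n)}:=P_{\bullet}\otimes_A F/\a^nF$, and the transition maps of the tower $\{C^{(n)}\}$ are degreewise surjective.

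Next I would invoke the Milnor exact sequence for a tower of complexes with surjective transition maps: for every $i$,
\[
0\to\mrm{R}^1\varprojlim_n\mrm{H}_{i+1}(C^{(n)})\to\Tor_i^A(A/J,\Lambda_{\a}(F))\to\varprojlim_n\mrm{H}_i(C^{(n)})\to0.
\]
Since $F$ is flat over $A$, the module $F/\a^nF$ is flat over $A/\a^n$, hence $\mrm{H}_i(C^{(n)})=\mrm{H}_i(P_{\bullet}\otimes_A A/\a^n)\otimes_A F=\Tor_i^A(A/J,A/\a^n)\otimes_A F$. For $i=1$ this reduces everything to showing that $\varprojlim_n\bigl(\Tor_1^A(A/J,A/\a^n)\otimes_A F\bigr)=0$ and $\mrm{R}^1\varprojlim_n\bigl(\Tor_2^A(A/J,A/\a^n)\otimes_A F\bigr)=0$.

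This is where noetherianness does the real work, via the Artin--Rees lemma. For $i=1$ one has $\Tor_1^A(A/J,A/\a^n)=(J\cap\a^n)/\a^nJ$, and the Artin--Rees inclusion $J\cap\a^n\subseteq\a^{\,n-c}J$ (valid for $n\ge c$ and some fixed $c$) shows that the tower $\{\Tor_1^A(A/J,A/\a^n)\}_n$ is \emph{pro-zero}, i.e.\ essentially zero; for $i\ge2$ the corresponding statement that $\{\Tor_i^A(A/J,A/\a^n)\}_n$ is pro-zero is the standard ``Artin--Rees for $\Tor$'' over a noetherian ring. Applying the additive functor $F\otimes_A(-)$ preserves pro-zero towers, and a pro-zero tower has vanishing $\varprojlim$ and $\mrm{R}^1\varprojlim$; feeding this into the Milnor sequence with $i=1$ gives $\Tor_1^A(A/J,\Lambda_{\a}(F))=0$, and since $J$ was arbitrary, $\Lambda_{\a}(F)$ is flat over $A$.

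The main obstacle is precisely this last point: one must be at ease with pro-zero (essentially zero) towers and the behaviour of $\varprojlim$ and $\mrm{R}^1\varprojlim$ on them, and one needs the input that $\{\Tor_i^A(A/J,A/\a^n)\}_n$ is essentially zero for $i\ge1$ --- equivalently, that the derived $\a$-adic completion of a finitely generated module over a noetherian ring is concentrated in degree $0$. The remaining ingredients (reduction of flatness to the $\Tor$-criterion over a noetherian ring, flat base change $F\mapsto F/\a^nF$, commutation of $\varprojlim$ with the finite direct sums occurring in $P_k$, and the Mittag--Leffler condition needed for the Milnor sequence) are formal. An alternative route would be to reduce, using that $\widehat{A}$ is $A$-flat, to the case where $A$ is $\a$-adically complete and then apply the local criterion for flatness to the complete separated module $\Lambda_{\a}(F)$; but this still ultimately rests on the same Artin--Rees input, so the approach above seems the most direct.
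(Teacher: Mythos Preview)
Your argument is correct. The reduction to the $\Tor$-criterion, the identification $P_\bullet\otimes_A\Lambda_{\a}(F)\cong\varprojlim_n C^{(n)}$ for a finitely generated free resolution $P_\bullet$, the Milnor sequence, and the Artin--Rees input making $\{\Tor_i^A(A/J,A/\a^n)\}_n$ pro-zero for $i\ge 1$ (proved by the standard induction on $i$, with the base case $\Tor_1^A(A/J,A/\a^n)=(J\cap\a^n)/\a^nJ$) all go through as you describe, and tensoring with $F$ preserves pro-zero towers.

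Note, however, that the paper does \emph{not} supply its own proof of this theorem: it is stated as a known result and attributed to de~Jong \cite[Tag~0AGW]{SP}, with alternative proofs by Gabber--Ramero and by Yekutieli \cite[Theorem~0.1]{Ye3}. So there is no ``paper's own proof'' to compare against. For what it is worth, your approach is very close in spirit to de~Jong's argument in the Stacks Project: both ultimately rest on Artin--Rees and a $\varprojlim/\mrm{R}^1\varprojlim$ analysis. De~Jong packages the conclusion slightly differently (via the observation that $\Lambda_{\a}(F)/\a^n\Lambda_{\a}(F)\cong F/\a^nF$ is flat over $A/\a^n$ together with a flatness criterion for complete modules), whereas you compute $\Tor_1$ directly from the Milnor sequence; but the substantive input is the same.
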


Theorem \ref{thm:classical} is a classical result which has been known for many decades.
In contrast, Theorem \ref{thm:b} was first proved by de Jong in 2013 (see \cite[Tag 0AGW]{SP}).
If $F$ is a finitely generated $A$-module then this result is trivial.
Prior to de Jong's proof, the best known result was by Enochs,
who showed in \cite{En} that this holds under the additional assumption that $A$ has finite Krull dimension.
After de Jong's proof, other proofs were given by Gabber and Ramero, and by Yekutieli (\cite[Theorem 0.1]{Ye3}).

For a complex of $A$-modules $M$, its flat dimension,
denoted by $\flatdim_A(M)$ was defined in \cite[Definition 2.1.F]{AF}.
The functor $\Lambda_{\a}$ has a left derived functor
\[
\mrm{L}\Lambda_{\a}:\cat{D}(A) \to \cat{D}(A).
\]
It is calculated using K-flat resolutions.
The following is am immediate corollary of Theorem \ref{thm:b}:

\begin{cor}\label{cor:flat}
Let $A$ be a commutative noetherian ring,
and let $\a\subseteq A$ be an ideal.
Then for any $M \in \cat{D}^{\mrm{b}}(A)$, we have the following inequality:
\[
\flatdim_A \left( \mrm{L}\Lambda_{\a}(M) \right) \le \flatdim_A (M).
\]
\end{cor}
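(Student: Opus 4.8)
The plan is to deduce this from Theorem \ref{thm:b} by a short formal argument, parallel to the way Corollary \ref{cor:inj} follows from Theorem \ref{thm:classical}. If $\flatdim_A(M) = \infty$ there is nothing to prove, so put $d := \flatdim_A(M) < \infty$. Beyond Theorem \ref{thm:b}, the only inputs will be the definition of flat dimension from \cite[Definition 2.1.F]{AF} and the standard fact that a bounded-below complex of flat modules is K-flat, which lets us compute the derived completion termwise.

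In detail, I would argue as follows. By the definition of flat dimension, the condition $\flatdim_A(M) \le d$ means that $M$ is isomorphic in $\cat{D}(A)$ to a complex $F$ of flat $A$-modules with $F^i = 0$ for every $i < -d$. Being bounded below, such an $F$ is K-flat, so
\[
\mrm{L}\Lambda_{\a}(M) \;\simeq\; \Lambda_{\a}(F) \;=\; \bigl( \cdots \to \Lambda_{\a}(F^{i}) \to \Lambda_{\a}(F^{i+1}) \to \cdots \bigr),
\]
where the completion is applied in each degree. By Theorem \ref{thm:b} each $A$-module $\Lambda_{\a}(F^{i})$ is again flat, and of course $\Lambda_{\a}(F^{i}) = 0$ for $i < -d$. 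Hence $\mrm{L}\Lambda_{\a}(M)$ is isomorphic in $\cat{D}(A)$ to a complex of flat $A$-modules which vanishes in all degrees below $-d$, and therefore, once more by the definition of flat dimension, $\flatdim_A(\mrm{L}\Lambda_{\a}(M)) \le d = \flatdim_A(M)$.

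I do not expect a serious obstacle: all the content lies in Theorem \ref{thm:b} itself, which is exactly where the noetherian hypothesis is used and without which the statement is false (compare Theorem \ref{thm:classical}). The two points that require a little care are bookkeeping ones: one must choose the flat resolution $F$ so that its vanishing range matches $\flatdim_A(M)$ exactly --- not merely so that $F$ is bounded --- so that the resulting bound is the sharp one; and one must invoke the fact that bounded-below complexes of flat modules are K-flat, which is what justifies replacing $\mrm{L}\Lambda_{\a}(M)$ by the termwise completion $\Lambda_{\a}(F)$.
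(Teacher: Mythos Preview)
Your proposal is correct and is exactly the argument the paper has in mind: the paper itself gives no proof beyond declaring the result ``an immediate corollary'' of Theorem~\ref{thm:b}, and you have simply written out that immediate deduction. One tiny remark: the Avramov--Foxby definition of flat dimension already forces the flat complex $F$ to be bounded above as well as below, so $\Lambda_{\a}(F)$ is again a bounded complex of flats and the definition applies directly on the other end; you use this implicitly but do not state it.
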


Now let $A$ be a commutative DG-ring.
Given a finitely generated ideal $\bar{\a} \subseteq \bar{A}$,
there is a triangulated functor 
\[
\mrm{L}\Lambda_{\bar{\a}} : \cat{D}(A) \to \cat{D}(A)
\]
called the derived $\bar{\a}$-adic completion functor.
Its definition is recalled in Section \ref{seclc} below.
If $A$ is a commutative noetherian ring, it coincides with the usual left derived functor of adic completion.
To any $M \in \cat{D}(A)$ we can associate its flat dimension, denoted by $\flatdim_A(M)$. 
The definition is recalled in Section \ref{sec:hdim}. 

The second main result of this paper generalizes Corollary \ref{cor:flat} to commutative noetherian DG-rings:

\begin{thm}\label{thm:mainB}
Let $A$ be a commutative noetherian DG-ring,
and let $\bar{\a} \subseteq \mrm{H}^0(A)$ be an ideal.
Then for any $M \in \cat{D}^{\mrm{b}}(A)$, there is an inequality
\[
\flatdim_A \left(\mrm{L}\Lambda_{\bar{\a}}(M)\right) \le \flatdim_A (M).
\] 
\end{thm}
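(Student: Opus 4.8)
The plan is to deduce Theorem \ref{thm:mainB} from Theorem \ref{thm:mainA} by a duality argument, rather than attempting a direct generalization of de Jong's proof of Theorem \ref{thm:b}. The reason is that de Jong's argument (and the variants of Gabber--Ramero and Yekutieli) are quite specific to the module-theoretic setting of ordinary noetherian rings, whereas over a DG-ring we have the homological machinery of $\mrm{R}\Gamma_{\bar\a}$ and $\mrm{L}\Lambda_{\bar\a}$ and, crucially, the MGM equivalence (Greenlees--May / Matlis--Greenlees--May duality) relating the two on the subcategory of cohomologically complete, resp. cohomologically torsion, DG-modules. I expect that the relevant form of MGM duality over commutative noetherian DG-rings is available (it should be recalled in Section \ref{seclc}), together with the adjunction $\mrm{R}\Gamma_{\bar\a} \dashv \mrm{L}\Lambda_{\bar\a}$ holding on all of $\cat{D}(A)$.

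First I would recall that flat dimension and injective dimension over a commutative noetherian DG-ring $A$ are interchanged by Matlis-type duality. Concretely, fix an injective cogenerator $E$ of $\cat{D}(\bar A)$ (for instance the injective hull of $\bigoplus \bar A/\mfrak{m}$ over all maximal ideals, or $E(\bar A)$ as used in the paper), and consider the duality functor $D(-) := \RHom_A(-, E)$, where $E$ is regarded as an object of $\cat{D}(A)$ via restriction of scalars along $A \to \bar A$. The key input from Section \ref{sec:hdim} (which I would cite) is the statement
\[
\flatdim_A(M) = \injdim_A(D(M))
\]
for $M \in \cat{D}^{\mrm{b}}(A)$ with appropriate finiteness, and more robustly that $\flatdim_A(M) \le n$ if and only if $\RHom_A(M,E)$ has injective dimension $\le n$; such biduality statements are standard and I expect the paper establishes them, since they are needed to even phrase the homological dimensions correctly over DG-rings.

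Next comes the heart of the argument: one must identify $D(\mrm{L}\Lambda_{\bar\a}(M))$ with $\mrm{R}\Gamma_{\bar\a}(D(M))$, or at least show the former is a retract/summand-up-to-resolution of the latter. This should follow from the adjunction together with the fact that $E$ is $\bar\a$-torsion: indeed, since $E = E(\bar A)$ decomposes as a direct sum of indecomposable injectives, its $\bar\a$-adic completion issues are controlled, and one gets a natural isomorphism
\[
\RHom_A\bigl(\mrm{L}\Lambda_{\bar\a}(M),\, E\bigr) \;\simeq\; \RHom_A\bigl(M,\, \mrm{R}\Gamma_{\bar\a}(E)\bigr) \;\simeq\; \mrm{R}\Gamma_{\bar\a}\bigl(\RHom_A(M,E)\bigr),
\]
where the first isomorphism is the $\mrm{R}\Gamma_{\bar\a} \dashv \mrm{L}\Lambda_{\bar\a}$ adjunction applied with the torsion object $E$ in the second slot (so $\mrm{R}\Gamma_{\bar\a}(E) \simeq E$ when $E$ is already torsion, or more carefully one uses that $\mrm{R}\Gamma_{\bar\a}$ commutes with the relevant $\RHom$), and the second is a standard interchange of $\mrm{R}\Gamma_{\bar\a}$ with $\RHom$ out of a fixed argument, valid because $\mrm{R}\Gamma_{\bar\a}$ is computed by the (bounded, finite, flat) telescope complex $\operatorname{Tel}(A;\bar\a)$, so that $\mrm{R}\Gamma_{\bar\a}(\RHom_A(M,E)) \simeq \RHom_A(M,E) \otimes_A \operatorname{Tel}(A;\bar\a) \simeq \RHom_A(M, E\otimes_A \operatorname{Tel}(A;\bar\a)) \simeq \RHom_A(M,\mrm{R}\Gamma_{\bar\a}(E))$, using that $\operatorname{Tel}$ is a bounded complex of finite free modules so tensoring commutes with $\RHom$ out of $M$. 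Granting this chain of isomorphisms, we conclude:
\[
\flatdim_A(\mrm{L}\Lambda_{\bar\a}(M)) = \injdim_A\bigl(D(\mrm{L}\Lambda_{\bar\a}(M))\bigr) = \injdim_A\bigl(\mrm{R}\Gamma_{\bar\a}(D(M))\bigr) \le \injdim_A(D(M)) = \flatdim_A(M),
\]
where the middle inequality is precisely Theorem \ref{thm:mainA} applied to $D(M) \in \cat{D}^{\mrm{b}}(A)$ (which lies in the bounded derived category since $M$ does and $E$ has finite injective dimension over $\bar A$).

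The main obstacle I anticipate is the biduality / faithfulness of the Matlis functor $D$ in the DG setting: over an ordinary ring one has $\flatdim_A(M) \le \injdim_A(D(M))$ by a clean adjunction argument ($\operatorname{Tor}$ groups dualize to $\operatorname{Ext}$ groups against an injective cogenerator), but over a DG-ring the natural map $M \to D(D(M))$ need not be an isomorphism, so one has to argue the inequality $\flatdim_A(M) \le \injdim_A(D(M))$ directly. This should still work: if $\injdim_A(D(M)) \le n$, then for every $\bar A$-module $N$ (equivalently every DG-module concentrated in one degree, or a suitable generating set) one computes $\operatorname{Tor}$-amplitude of $M$ via $\operatorname{H}(N \otimes^{\mrm L}_A M)$, dualizes it into $\operatorname{H}(\RHom_A(N \otimes^{\mrm L}_A M, E)) = \operatorname{H}(\RHom_A(N, D(M)))$, and reads off the bound $n$ from the injective dimension of $D(M)$, using that $E$ is a faithful injective cogenerator so that vanishing of the dual detects vanishing. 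The second potential subtlety is ensuring the telescope-complex identity $\mrm{R}\Gamma_{\bar\a}(X) \simeq X \otimes_A \operatorname{Tel}(A;\bar\a)$ with $\operatorname{Tel}$ a genuinely finite complex of finite free $A$-modules (so that $- \otimes_A \operatorname{Tel}$ commutes with arbitrary $\RHom_A(M,-)$); this is exactly the explicit construction of $\mrm{R}\Gamma_{\bar\a}$ recalled in Section \ref{seclc}, so I would simply invoke it. Once these two points are in place, the proof is the four-line computation above, and I would present it as such in Section \ref{sec:hdim} immediately after Theorem \ref{thm:main}.
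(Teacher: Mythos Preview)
Your approach is genuinely different from the paper's and contains concrete gaps that would need to be filled before the argument goes through.

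\textbf{The telescope complex is not perfect.} You write that ``$\operatorname{Tel}$ is a bounded complex of finite free modules so tensoring commutes with $\RHom$ out of $M$''. This is false: each term of $\operatorname{Tel}(A^0;a)$ is the countably infinite free module $\bigoplus_{i=0}^\infty A^0$, not a finite free module (see the explicit description in Section~\ref{seclc}). Consequently the map
\[
\RHom_A(M,E)\otimes_A \operatorname{Tel} \longrightarrow \RHom_A(M, E\otimes_A \operatorname{Tel})
\]
is \emph{not} an isomorphism for arbitrary $M\in\cat{D}^{\mrm b}(A)$. The paper's Proposition~\ref{prop:eval} would give this only under the extra hypothesis $M\in\cat{D}^-_{\mrm f}(A)$, which you do not have.

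\textbf{The injective cogenerator is not $\bar\a$-torsion.} You take $E$ to be an injective cogenerator of $\operatorname{Mod}(\bar A)$, e.g.\ $\bigoplus_{\mfrak m} E(\bar A/\mfrak m)$, and then assert $\mrm{R}\Gamma_{\bar\a}(E)\simeq E$. But $E(\bar A/\mfrak m)$ is $\bar\a$-torsion only when $\bar\a\subseteq\mfrak m$, so in general $\mrm{R}\Gamma_{\bar\a}(E)\not\simeq E$, and your first displayed isomorphism $\RHom_A(\mrm{L}\Lambda_{\bar\a}(M),E)\simeq\RHom_A(M,\mrm{R}\Gamma_{\bar\a}(E))$ also does not follow from the adjunction in the direction you invoke.

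\textbf{The duality $\flatdim_A(M)=\injdim_A(D(M))$ is not available.} You say you ``expect the paper establishes'' this; it does not. Over a DG-ring, with $E$ an injective $\bar A$-module, one has $\RHom_A(X,E)\cong\RHom_{\bar A}(\bar A\otimes^{\mrm L}_A X,E)$, so vanishing of $\mrm H^i(\RHom_A(N\otimes^{\mrm L}_A M,E))$ detects vanishing of $\mrm H^{-i}(\bar A\otimes^{\mrm L}_A N\otimes^{\mrm L}_A M)$, not of $\mrm H^{-i}(N\otimes^{\mrm L}_A M)$. Extracting the claimed equality from this requires further argument.

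\textbf{What the paper actually does.} Rather than dualizing to Theorem~\ref{thm:mainA}, the paper reduces directly to the ordinary noetherian ring $\bar A$: it proves $\flatdim_A(M)=\flatdim_{\bar A}(\bar A\otimes^{\mrm L}_A M)$ (Theorem~\ref{thm:flatDG}), shows that $\bar A\otimes^{\mrm L}_A(-)$ intertwines $\mrm{L}\Lambda^A_{\bar\a}$ with $\mrm{L}\Lambda^{\bar A}_{\bar\a}$ via a tensor-evaluation argument (Lemma~\ref{lem:LLambda}, which uses Proposition~\ref{prop:eval}(2) with $\projdim_A(T)<\infty$ and $\bar A\in\cat{D}^-_{\mrm f}(A)$), and then invokes Corollary~\ref{cor:flat} over the ring $\bar A$. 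This parallels the proof of Theorem~\ref{thm:mainA} but is logically independent of it.
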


This will be proved in Theorem \ref{thm:main2} below.

\section{Preliminaries}

In this section we recall some basic facts concerning commutative DG-rings.

\subsection{Commutative DG-rings}\label{sec:cdg}

A DG-ring $A$ is a $\mathbb{Z}$-graded ring
\[
A = \bigoplus_{n=-\infty}^{\infty} A^n
\]
together with an additive differential $d:A \to A$ of degree $+1$,
such that $d\circ d = 0$,
and such that the Leibniz rule holds: $d(a\cdot b) = d(a) \cdot b + (-1)^i \cdot a \cdot d(b)$ for all $a \in A^i$ and $b \in A^j$.
A DG-ring $A$ is called non-positive if $A^i = 0$ for all $i > 0$.
We say that $A$ is commutative if $b\cdot a = (-1)^{i\cdot j} \cdot a \cdot b$ for all $a \in A^i$ and $b \in A^j$,
and moreover $a\cdot a = 0$ if $i$ is odd.

In this paper, \textbf{all DG-rings are assumed to be commutative and non-positive}.
Given a commutative DG-ring $A$, 
a DG-module over it is graded $A$-module $M$ with a differential $d:M \to M$ of degree $+1$
satisfying a graded Leibniz rule. The category of all DG-modules is denoted by $\opn{DGMod}(A)$.
Inverting quasi-isomorphisms in it, we obtain the derived category of DG-modules over $A$, denoted by $\cat{D}(A)$.

If $A$ is a commutative DG-ring, recall from the introduction that we denote by $\bar{A}$ the commutative ring $\mrm{H}^0(A)$.
The DG-ring $A$ is called noetherian if $\bar{A}$ is noetherian and $\mrm{H}^i(A)$ is a finitely generated $\bar{A}$-module for all $i<0$.

\subsection{Local (co)homology over commutative DG-rings and the telescope complex}\label{seclc}

Let $A$ be a commutative DG-ring,
and let $\bar{\a} \subseteq \bar{A}$ be a finitely generated ideal.
The category of derived $\bar{\a}$-torsion DG-modules,
denoted by $\cat{D}_{\bar{\a}-\opn{tor}}(A)$,
is the full triangulated subcategory of $\cat{D}(A)$,
consisting of DG-modules $M$,
such that for all $n\in \mathbb{Z}$,
the $\bar{A}$-module $\mrm{H}^n(M)$ is $\bar{\a}$-torsion.

According to \cite[Theorem 2.13(1)]{Sh},
the inclusion functor $i_{\bar{\a}}:\cat{D}_{\bar{\a}-\opn{tor}}(A) \inj \cat{D}(A)$ has a right adjoint $F_{\bar{\a}}:\cat{D}(A) \to \cat{D}_{\bar{\a}-\opn{tor}}(A)$.
The composition $i_{\bar{\a}} \circ F_{\bar{\a}}: \cat{D}(A) \to \cat{D}(A)$ is denoted by 
$\mrm{R}\Gamma_{\bar{\a}}:\cat{D}(A) \to \cat{D}(A)$ and called the local cohomology functor of $A$ with respect to $\bar{\a}$.

By \cite[Theorem 2.13(2)]{Sh}, 
the functor $\mrm{R}\Gamma_{\bar{\a}}$ has a left adjoint 
$\mrm{L}\Lambda_{\bar{\a}}:\cat{D}(A) \to \cat{D}(A)$ called the derived completion (or local homology) functor of $A$ with respect to $\bar{\a}$.

Below, we will give explicit formulas for the functors $\mrm{R}\Gamma_{\bar{\a}}, \mrm{L}\Lambda_{\bar{\a}}$.

\begin{rem}\label{rem:noetherianring}
If $A$ is an ordinary commutative noetherian ring,
by \cite[Theorem 7.12]{PSY1},
these constructions of local cohomology and derived completion with respect to $\bar{\a}$ coincide with the usual right derived functor of the functor $\Gamma_{\bar{\a}}$ 
and left derived functor of $\Lambda_{\bar{\a}}$.
\end{rem}

To give an explicit formula for the local cohomology $\mrm{R}\Gamma_{\bar{\a}}$ and derived completion $\mrm{L}\Lambda_{\bar{\a}}$,
we recall the construction of the telescope complex from \cite[Section 5]{PSY1}:
given a commutative ring $A$ and some $a \in A$,
the telescope complex $\opn{Tel}(A;a)$ is the cochain complex 
\[
0 \to \bigoplus_{i=0}^\infty A \xrightarrow{d} \bigoplus_{i=0}^\infty A \to 0
\]
with non-zero components in degrees $0,1$.
Letting $\{\delta_i\mid i\ge 0\}$ be the basis of the countably generated free A-module 
$\bigoplus_{i=0}^\infty A$, the differential $d$ is defined by 
\[
d(\delta_i) = \begin{cases}
               \delta_0 & \text{if $i=0$}\\
               \delta_{i-1}-a\cdot \delta_i & \text{if $i\ge 1$}
              \end{cases}
\]
Given a finite sequence $\mathbf{a}=(a_1,\dots,a_n)$ of elements of $A$,
the telescope complex associated to $\mathbf{a}$ is the complex
\[
\opn{Tel}(A;\mathbf{a}) := \opn{Tel}(A;a_1) \otimes_A \opn{Tel}(A;a_2) \otimes_A \dots \otimes_A \opn{Tel}(A;a_n). 
\]
This is a bounded complex of free $A$-modules.
The telescope complex has the following base change property: 
if $f:A \to B$ is a homomorphism between commutative rings,
and $\mathbf{b} = (f(a_1),f(a_2),\dots,f(a_n))$,
there is an isomorphism of complexes of $B$-modules $\opn{Tel}(A;\mathbf{a})\otimes_A B \cong \opn{Tel}(B;\mathbf{b})$.

Let $A$ be a commutative DG-ring,
and let $\bar{\a} \subseteq \bar{A}$ be a finitely generated ideal.
Let $\bar{\mathbf{a}} = (\bar{a}_1,\dots,\bar{a}_n)$ be a finite sequence of elements in $\bar{A}$ that generates $\bar{\a}$,
and using the surjection $A^0 \to \bar{A}$,
choose some lifts $\mathbf{a}=(a_1,\dots,a_n)$ of $\bar{\mathbf{a}}$ to the commutative ring $A^0$.
By \cite[Proposition 2.4]{Sh} and the base change property of the telescope complex,
there are isomorphisms
\begin{equation}\label{eqn:telescope}
 \mrm{R}\Gamma_{\bar{\a}}(-) \cong \opn{Tel}(A^0;\mathbf{a}) \otimes_{A^0} -
\end{equation}
and
\begin{equation}\label{eqn:telescope2}
 \mrm{L}\Lambda_{\bar{\a}}(-) \cong \opn{Hom}_{A^0}(\opn{Tel}(A^0;\mathbf{a}),-)
\end{equation}
of functors $\cat{D}(A) \to \cat{D}(A)$.

\subsection{Homological dimensions over commutative DG-rings}\label{sec:hdim}

Let $A$ be a commutative DG-ring.
Given a DG-module $M$ over $A$,
we set 
\[
\inf(M) := \inf \{ i \in \mathbb{Z} \mid \mrm{H}^i(M) \ne 0 \}, \quad \sup(M) := \sup \{ i \in \mathbb{Z} \mid \mrm{H}^i(M) \ne 0 \}
\]
and $\amp(M) := \sup(M) - \inf(M)$. 

Let $M$ be a DG-module over $A$.
Following \cite[Section 2.I]{AF},
we let the injective dimension of $M$, denoted by $\injdim_A (M)$ to be the number:
\[
\inf \{ n \in \mathbb{Z} \mid \Ext^i_A(N,M) = 0 \text{ for any bounded DG-module } N \text{ and any } i>n-\inf N\}
\]
where $\Ext^i_A(N,M) := \mrm{H}^i(\mrm{R}\opn{Hom}_A(N,M))$.
By \cite[Theorem 2.4.I]{AF}\footnote{Unlike \cite{AF}, in this paper we use a cohomological notation, hence the difference between the formulas.} this coincides with the usual definition in case $A$ is a commutative ring.
Similarly, we let the flat dimension of $M$, denoted by $\flatdim_A (M)$ to be the number:
\[
\inf \{ n \in \mathbb{Z} \mid \Tor^i_A(N,M) = 0 \text{ for any bounded DG-module } N \text{ and any } i>n-\inf N\}
\] 
where $\Tor^i_A(N,M) := \mrm{H}^{-i}(N\otimes^{\mrm{L}}_A M)$.
The projective dimension of $M$, denoted by $\projdim_A(M)$ is defined similarly.

\subsection{Subcategories of $\cat{D}(A)$}

Given a commutative DG-ring $A$,
we denote by $\cat{D}^{-}(A)$ (respectively by $\cat{D}^{+}(A)$)
the full triangulated subcategory of $A$ consisting of DG-modules with bounded above (resp. bounded below) cohomology.
The full triangulated subcategory of DG-modules with bounded cohomology is $\cat{D}^{\mrm{b}}(A) = \cat{D}^{-}(A) \cap \cat{D}^{+}(A)$.
Assume further that $A$ is noetherian. 
In particular, $\bar{A}$ is a noetherian ring.
We say that $M \in \cat{D}(A)$ has finitely generated cohomology if for all $i \in \mathbb{Z}$,
$\mrm{H}^i(M)$ is a finitely generated $\bar{A}$-module.
The full triangulated subcategory of $\cat{D}(A)$ consisting of DG-modules with finitely generated cohomologies is denoted by $\mrm{D}_{\mrm{f}}(A)$.
We let $\cat{D}^{+}_{\mrm{f}}(A) = \cat{D}^{+}(A) \cap \cat{D}_{\mrm{f}}(A)$,
$\cat{D}^{-}_{\mrm{f}}(A) = \cat{D}^{-}(A) \cap \cat{D}_{\mrm{f}}(A)$
and $\cat{D}^{\mrm{b}}_{\mrm{f}}(A) = \cat{D}^{b}(A) \cap \cat{D}_{\mrm{f}}(A)$.

\subsection{The tensor-evaluation morphism}

Let $A$ be a commutative DG-ring.
Given 
\[
M,N,K \in \cat{D}(A),
\]
there is a natural morphism
\[
\eta_{M,N,K}: \mrm{R}\opn{Hom}_A(M,N)\otimes^{\mrm{L}}_A K \to \mrm{R}\opn{Hom}_A(M,N\otimes^{\mrm{L}}_A K) 
\]
in $\cat{D}(A)$, defined as follows:
Let $P \cong M$ be a K-projective resolution,
and let $F \cong K$ be a K-flat resolution.
Then, $\eta_{M,N,K}$ is the composition
\begin{align*}
 \mrm{R}\opn{Hom}_A(M,N)\otimes^{\mrm{L}}_A K \cong \opn{Hom}_A(P,N)\otimes_A F \xrightarrow{\phi}\\
 \opn{Hom}_A(P,N\otimes_A F) \cong \mrm{R}\opn{Hom}_A(M,N\otimes^{\mrm{L}}_A K) 
\end{align*}
where the map $\phi$ is the usual tensor-evaluation morphism (see \cite[Equation (5.6)]{Ye2} for its formula in the DG-case).
The next result generalizes \cite[Proposition 6.7]{Sh1}.

\begin{prop}\label{prop:eval}
Let $A$ be a commutative noetherian DG-ring, 
and let $M,N,K \in \cat{D}(A)$.
Assume one of the following holds:
\begin{enumerate}
 \item $M \in \cat{D}^{-}_{\mrm{f}}(A)$, $N \in \cat{D}^{\mrm{b}}(A)$ and $\flatdim_A(K) < \infty$.
 \item $\projdim_A(M) < \infty$, $N \in \cat{D}^{\mrm{b}}(A)$ and $K \in \mrm{D}^{-}_{\mrm{f}}(A)$.
\end{enumerate}
Then the morphism
\[
\eta_{M,N,K}: \mrm{R}\opn{Hom}_A(M,N)\otimes^{\mrm{L}}_A K \to \mrm{R}\opn{Hom}_A(M,N\otimes^{\mrm{L}}_A K) 
\]
is an isomorphism in $\cat{D}(A)$.
\end{prop}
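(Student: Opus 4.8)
The plan is to reduce the DG-statement to the known ring-theoretic case over the commutative ring $A^0$, exactly as the tensor-evaluation morphism $\eta_{M,N,K}$ is built through its ring-level counterpart $\phi$. First I would recall that quasi-isomorphism is detected on underlying complexes, and that the restriction functor $\cat{D}(A) \to \cat{D}(A^0)$ along $A^0 \to A$ is conservative and commutes with $\otimes^{\mrm{L}}$ and with $\mrm{R}\opn{Hom}_{A}(M,-)$ in the appropriate way once $M$ is replaced by a K-projective (hence K-projective over $A^0$) resolution and $K$ by a K-flat (hence K-flat over $A^0$) resolution; indeed, by the definition of $\eta_{M,N,K}$ given above, after choosing such resolutions $P \iso M$ and $F \iso K$, the morphism $\eta_{M,N,K}$ is literally the tensor-evaluation map $\phi \colon \opn{Hom}_A(P,N)\otimes_A F \to \opn{Hom}_A(P,N\otimes_A F)$, whose underlying map of complexes of $A^0$-modules is the classical tensor-evaluation morphism $\opn{Hom}_{A^0}(P,N)\otimes_{A^0} F \to \opn{Hom}_{A^0}(P,N\otimes_{A^0} F)$. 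So it suffices to prove $\phi$ is a quasi-isomorphism over the ordinary ring $A^0$.

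At this point I would invoke the classical tensor-evaluation theorem for complexes over a commutative ring (the relevant statement is in \cite{AF}, cf. also \cite{Sh1}): if $M$ is a complex with finitely generated cohomology and bounded above, $N$ is cohomologically bounded, and $K$ has finite flat dimension, then the tensor-evaluation morphism is an isomorphism; and dually, if $M$ has finite projective dimension, $N$ is cohomologically bounded, and $K$ has finitely generated cohomology and is bounded above, then it is an isomorphism. The remaining work is to check that the DG-hypotheses in (1) and (2) descend to the required hypotheses over $A^0$. For (1): $M \in \cat{D}^{-}_{\mrm{f}}(A)$ means $\mrm{H}^i(M)$ is finitely generated over $\bar{A}$ for all $i$ and vanishes for $i \gg 0$; since $A$ is noetherian, $\bar{A}$ is module-finite over $A^0$ in each cohomological degree, and one argues that the cohomology of $M$, viewed over $A^0$, is still degreewise finitely generated and bounded above — this uses the noetherian hypothesis on $A$ essentially (it is what forces $\mrm{H}^i(A)$, and hence the $A^0$-module structure on $\mrm{H}^i(M)$, to be finitely generated). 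The condition $N \in \cat{D}^{\mrm{b}}(A)$ is literally $N \in \cat{D}^{\mrm{b}}(A^0)$, and $\flatdim_A(K) < \infty$ implies $\flatdim_{A^0}(K) < \infty$ because a K-flat resolution of $K$ over $A$ is K-flat over $A^0$ and boundedness of its flat dimension is read off the same complex (here one should cite the relevant comparison of homological dimensions along $A^0 \to A$, e.g.\ from \cite{Sh1} or \cite{Ye2}). For (2), the analogous descent: $\projdim_A(M) < \infty$ gives $\projdim_{A^0}(M) < \infty$, and $K \in \mrm{D}^{-}_{\mrm{f}}(A)$ gives $K \in \cat{D}^{-}_{\mrm{f}}(A^0)$ by the same noetherian argument.

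The main obstacle I anticipate is the bookkeeping in the descent of the finiteness conditions along $A^0 \to A$: one must be careful that ``finitely generated cohomology over $\bar{A}$'' together with the noetherian hypothesis really does yield ``finitely generated cohomology over $A^0$,'' since $A^0$ itself need not be noetherian a priori and the $A^0$-module structure on $\mrm{H}^i(M)$ factors through $\bar{A}$ only in degree reasoning that must be handled via a truncation or spectral-sequence argument, or by first reducing to $M$ having a single cohomology (using the way-out/bootstrapping principle on $\cat{D}^{-}_{\mrm{f}}$) and then to $M = \bar{A}$-modules. A clean way around this is to note that for the tensor-evaluation isomorphism one only needs $M \in \cat{D}^{-}_{\mrm{f}}(A)$ to guarantee $M$ is isomorphic in $\cat{D}(A)$ to a bounded-above complex of finitely generated free $A$-modules (this is standard over a noetherian DG-ring), and such a complex is automatically bounded above and degreewise finite over $A^0$ up to the ambiguity coming from the unbounded-below part of $A$ itself — so in fact one replaces $M$ by its semi-free resolution over $A$ and transfers the classical result degree by degree, using cohomological boundedness of $N$ to control convergence. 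Once this reduction is in place, the rest is a direct citation of the classical tensor-evaluation theorem.
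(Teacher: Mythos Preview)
Your reduction to $A^0$ contains a genuine error. After choosing a K-projective resolution $P\simeq M$ and a K-flat resolution $F\simeq K$ over $A$, the map $\phi\colon \opn{Hom}_A(P,N)\otimes_A F \to \opn{Hom}_A(P,N\otimes_A F)$ is indeed a map of complexes of $A^0$-modules, but its underlying complex is \emph{not} the $A^0$-linear tensor-evaluation map $\opn{Hom}_{A^0}(P,N)\otimes_{A^0} F \to \opn{Hom}_{A^0}(P,N\otimes_{A^0} F)$. The graded pieces of $\opn{Hom}_A(P,N)$ consist of graded $A$-linear maps (linearity with respect to every $A^i$, not just $A^0$), so $\opn{Hom}_A(P,N)$ is only a subcomplex of $\opn{Hom}_{A^0}(P,N)$; dually $-\otimes_A-$ is a quotient of $-\otimes_{A^0}-$. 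Thus the forgetful functor to $\cat{D}(A^0)$ does not convert the $A$-linear tensor-evaluation morphism into the $A^0$-linear one, and the classical theorem over $A^0$ (even if it applied---note $A^0$ need not be noetherian) says nothing about $\phi$. The subsequent paragraphs about descending finiteness and homological dimensions along $A^0\to A$ are therefore beside the point; the difficulty you identify there is real, but it is not the only obstruction.

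The paper's argument stays entirely over $A$ and avoids $A^0$ altogether. For part~(1) one fixes $N,K$ and regards $\eta_{M,N,K}$ as a transformation between the two contravariant functors $\mrm{R}\opn{Hom}_A(-,N)\otimes^{\mrm{L}}_A K$ and $\mrm{R}\opn{Hom}_A(-,N\otimes^{\mrm{L}}_A K)$; the hypotheses $N\in\cat{D}^{\mrm{b}}(A)$ and $\flatdim_A(K)<\infty$ make both functors way-out right, and $\eta_{A,N,K}$ is trivially an isomorphism. A DG version of the way-out lemma (as in \cite[Theorem~2.11]{Ye2}) then gives the isomorphism for all $M\in\cat{D}^{-}_{\mrm{f}}(A)$. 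Part~(2) is handled symmetrically, fixing $M,N$ and varying $K$. The hint you give at the very end---resolve $M$ semi-freely over $A$ and bootstrap---is in spirit the correct approach, but it should be the main argument, not a fallback, and it is precisely what the way-out lemma packages.
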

\begin{proof}
\begin{enumerate}
 \item Fixing such $N,K$, we have a natural morphism 
 \[
  \zeta_M: \mrm{R}\opn{Hom}_A(M,N)\otimes^{\mrm{L}}_A K \to \mrm{R}\opn{Hom}_A(M,N\otimes^{\mrm{L}}_A K) 
 \]
 These assumptions on $N,K$ ensure that the functors 
 $\mrm{R}\opn{Hom}_A(-,N)\otimes^{\mrm{L}}_A K$ and $\mrm{R}\opn{Hom}_A(-,N\otimes^{\mrm{L}}_A K)$
 are both contravariant way-out right functors. 
 Clearly, $\zeta_A$ is an isomorphism. 
 Hence, by a DG-version of the lemma on way-out functors (for instance, \cite[Theorem 2.11]{Ye2}),
 we deduce that $\zeta_M$ is an isomorphism for any $M \in \cat{D}^{-}_{\mrm{f}}(A)$.
 
 \item Fixing such $M,N$, we have a natural morphism
 \[
  \iota_K: \mrm{R}\opn{Hom}_A(M,N)\otimes^{\mrm{L}}_A K \to \mrm{R}\opn{Hom}_A(M,N\otimes^{\mrm{L}}_A K).
 \]
 These assumptions on $M,N$ ensure that the functors 
 $\mrm{R}\opn{Hom}_A(M,N)\otimes^{\mrm{L}}_A -$ and $\mrm{R}\opn{Hom}_A(M,N\otimes^{\mrm{L}}_A -)$
 are both covariant way-out left functors, and it is clear that $\iota_A$ is an isomorphism. 
 Hence, by the lemma on way-out functors, $\iota_K$ is an isomorphism for any $K \in \mrm{D}^{-}_{\mrm{f}}(A)$.

\end{enumerate}

\end{proof}

\section{Injective dimension of local cohomology}

In this section we will prove Theorem \ref{thm:mainA}.
We begin by proving some basic results about injective dimension over commutative DG-rings.

\begin{prop}\label{prop:injdim-of-RHom}
Let $A \to B$ be a homomorphism between commutative DG-rings,
and let $M$ be a DG-module over $A$.
Then 
\[
\injdim_B \left(\RHom_A(B,M)\right) \le \injdim_A (M).
\]
\end{prop}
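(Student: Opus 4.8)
The plan is to reduce to the definition of injective dimension and exploit adjunction. Recall $\injdim_A(M)$ is the infimum over $n$ such that $\Ext^i_A(N,M)=0$ for every bounded DG-module $N$ and every $i>n-\inf N$. So, setting $d:=\injdim_A(M)$ (which we may assume finite, else there is nothing to prove), I must show that for every bounded DG-module $N$ over $B$ and every $i>d-\inf N$, we have $\Ext^i_B(N,\RHom_A(B,M))=0$. First I would invoke the derived Hom-tensor adjunction for the homomorphism $A\to B$: there is a natural isomorphism
\[
\RHom_B\bigl(N,\RHom_A(B,M)\bigr)\;\cong\;\RHom_A(N,M)
\]
in $\cat{D}(A)$, where on the right $N$ is regarded as a DG-module over $A$ via restriction of scalars along $A\to B$. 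Applying $\mrm{H}^i$ gives $\Ext^i_B(N,\RHom_A(B,M))\cong\Ext^i_A(N,M)$.

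Next I would observe that restriction of scalars does not change the underlying cohomology, so $N$, viewed as a DG-module over $A$, is still bounded with the same $\inf$; in particular $\inf_A N=\inf_B N$. Therefore, for $i>d-\inf N$, the definition of $\injdim_A(M)=d$ forces $\Ext^i_A(N,M)=0$, hence $\Ext^i_B(N,\RHom_A(B,M))=0$. This shows $\injdim_B(\RHom_A(B,M))\le d=\injdim_A(M)$, as desired. (If $\injdim_A(M)=-\infty$, i.e. $M\cong 0$, then $\RHom_A(B,M)\cong 0$ and the inequality is trivial; if $\injdim_A(M)=+\infty$ there is nothing to prove.)

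The main point to be careful about is the adjunction isomorphism itself in the DG-setting: one needs that for $M\in\cat{D}(A)$ and $N\in\cat{D}(B)$ the canonical map $\RHom_B(N,\RHom_A(B,M))\to\RHom_A(N,M)$ is an isomorphism in $\cat{D}(A)$. This is the standard derived adjunction between restriction of scalars and coinduction along a DG-ring homomorphism, and it holds with no finiteness or boundedness hypotheses — one proves it by taking a K-injective resolution $M\to I$ over $A$, noting $\opn{Hom}_A(B,I)$ is then K-injective over $B$, and checking the adjunction isomorphism at the level of complexes. I expect this to be the only nontrivial ingredient; once it is in place, the dimension inequality is immediate from the definitions.
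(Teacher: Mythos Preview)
Your proof is correct and is exactly the paper's argument: the paper's proof consists of the single line ``This follows from the adjunction $\RHom_B(-,\RHom_A(B,M)) \simeq \RHom_A(-,M)$,'' and you have simply unpacked that line carefully, including the observation that restriction of scalars preserves $\inf N$ and the justification of the derived adjunction via K-injective resolutions.
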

\begin{proof}
This follows from the adjunction
\begin{equation*}
\RHom_B(-,\RHom_A(B,M)) \simeq \RHom_A(-,M)
\end{equation*}
\end{proof}

Over a commutative ring $A$, 
it is well known that one can detect the injective dimension of a complex $M$ by 
checking the vanishing of $\Ext^i_A(N,M)$ for all $A$-modules $N$ (that is, complexes with zero amplitude).
The proof of the next result is based on the same idea over a DG-ring $A$,
together with the observation that a DG-module whose amplitude is zero is isomorphic in the derived category to the shift of an $\bar{A}$-module.

\begin{thm}\label{thm:injDG}
Let $A$ be a commutative DG-ring,
and let $M$ be a DG-module over $A$. 
Then there is an equality
\begin{equation*}
\injdim_A (M) = \injdim_{\mrm{H}^0(A)} \left(\RHom_A(\mrm{H}^0(A),M)\right)
\end{equation*}
\end{thm}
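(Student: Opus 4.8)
The plan is to establish, for every integer $n$, the equivalence
\[
\injdim_A(M) \le n \quad\Longleftrightarrow\quad \injdim_{\mrm{H}^0(A)}\bigl(\RHom_A(\mrm{H}^0(A),M)\bigr) \le n,
\]
from which the asserted equality follows at once. The inequality ``$\ge$'' is in any case immediate from Proposition \ref{prop:injdim-of-RHom} applied with $B = \mrm{H}^0(A)$, so the real content is ``$\le$''; but it is cleanest to prove both directions simultaneously through this biconditional. The two ingredients are a dévissage showing that injective dimension over $A$, in the sense of \cite{AF}, is detected on DG-modules of amplitude zero (i.e.\ on $\mrm{H}^0(A)$-modules), together with the adjunction isomorphism converting $\RHom$ over $A$ of an $\mrm{H}^0(A)$-module into $\RHom$ over the ring $\mrm{H}^0(A)$ of $\RHom_A(\mrm{H}^0(A),M)$.

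First I would prove the reduction: for a DG-module $M$ over a commutative DG-ring $A$ and an integer $n$, one has $\injdim_A(M) \le n$ if and only if $\Ext^j_A(N,M) = 0$ for every $\mrm{H}^0(A)$-module $N$, regarded as a DG-module over $A$ concentrated in degree $0$ via the canonical DG-ring map $A \to \mrm{H}^0(A)$, and every $j > n$. The ``only if'' direction is immediate from the definition of $\injdim$, since such an $N$ has $\inf N = 0$. For ``if'' one argues by induction on $\amp(N)$ over bounded DG-modules $N$. If $\amp(N) = 0$ and $s = \inf N$, then $N \simeq \mrm{H}^s(N)[-s]$ in $\cat{D}(A)$ with $\mrm{H}^s(N)$ an $\mrm{H}^0(A)$-module, so for $i > n - s$ we get $\Ext^i_A(N,M) \cong \Ext^{i+s}_A(\mrm{H}^s(N),M) = 0$ by hypothesis. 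If $\amp(N) \ge 1$, put $s = \inf N$ and apply the contravariant functor $\RHom_A(-,M)$ to the smart-truncation triangle $\mrm{H}^s(N)[-s] \to N \to \tau^{\ge s+1}N \xrightarrow{+1}$; from the resulting long exact sequence the vanishing of $\Ext^i_A(N,M)$ for $i > n - \inf N$ follows, using the amplitude-zero case for the outer term $\mrm{H}^s(N)[-s]$ and the inductive hypothesis for $\tau^{\ge s+1}N$ (which has $\amp \le \amp(N) - 1$ and $\inf \ge s+1$, so that $n - \inf(\tau^{\ge s+1}N) < n - s < i$).

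Next, for any $\mrm{H}^0(A)$-module $N$ the adjunction $\RHom_{\mrm{H}^0(A)}\bigl(-,\RHom_A(\mrm{H}^0(A),M)\bigr) \simeq \RHom_A(-,M)$ used in Proposition \ref{prop:injdim-of-RHom} (with $B = \mrm{H}^0(A)$) gives $\Ext^j_A(N,M) \cong \Ext^j_{\mrm{H}^0(A)}\bigl(N,\RHom_A(\mrm{H}^0(A),M)\bigr)$. Applying the reduction lemma once over $A$ and, since $\mrm{H}^0(\mrm{H}^0(A)) = \mrm{H}^0(A)$, once over the ordinary commutative ring $\mrm{H}^0(A)$, we see that both sides of the desired equivalence translate into the same condition, namely the vanishing of these Ext-groups for all $\mrm{H}^0(A)$-modules $N$ and all $j > n$. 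Hence the equivalence holds for every $n$, and the theorem follows.

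I do not expect a serious obstacle here. The one point requiring a little care is the passage, via the truncation triangles, from arbitrary bounded test DG-modules to $\mrm{H}^0(A)$-modules, together with the bookkeeping of shifts in the long exact sequence; everything else is formal, and in particular no noetherian hypothesis on $A$ is used anywhere.
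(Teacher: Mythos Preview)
Your proposal is correct and follows essentially the same approach as the paper: the inequality $\ge$ via Proposition~\ref{prop:injdim-of-RHom}, and the reverse by induction on $\amp(N)$ using truncation triangles together with the adjunction identifying $\RHom_A(N,M)$ with $\RHom_{\bar A}(N,\RHom_A(\bar A,M))$ for $N$ an $\bar A$-module. The only cosmetic differences are that you make the truncation triangle $\mrm{H}^s(N)[-s]\to N\to\tau^{\ge s+1}N$ explicit (the paper cites \cite[Page~299]{FIJ} for a triangle with both pieces of smaller amplitude) and you package the argument as a separate reduction lemma before applying it; neither changes the substance.
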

\begin{proof}
Applying Proposition \ref{prop:injdim-of-RHom} to the map $A \to \bar{A}$,
we have that

\begin{equation*}
\injdim_A (M) \ge \injdim_{\bar{A}} \left(\RHom_A(\bar{A},M)\right).
\end{equation*}
To prove the converse, assume $\injdim_{\bar{A}} \left(\RHom_A(\bar{A},M)\right) = n < \infty$.
Let $N$ be a bounded DG-module.
We must show that $\Ext^i_A(N,M) = 0$ for all $i>n-\inf N$.
We will prove this by induction on $\amp N$.
If $\amp N = 0$, 
there is some $m \in \mathbb{Z}$ such that 
\begin{equation*}
N \simeq \mrm{H}^m(N)[-m]
\end{equation*}
It follows by adjunction that
\begin{align*}
& \RHom_A(N,M) \simeq \RHom_A(\mrm{H}^m(N)[-m],M) \\
& \simeq \RHom_{\bar{A}}(\mrm{H}^m(N)[-m],\RHom_A(\bar{A},M))
\end{align*}
so the fact that $\injdim_{\bar{A}}\left(\RHom_A(\bar{A},M)\right) = n$ 
implies that $\Ext^i_A(N,M) = 0$ for all $i>n-\inf N$.

Given $l>0$, assume now that for any bounded DG-module $N$ with $\amp N < l$ we have that
$\Ext^i_A(N,M) = 0$ for all $i>n-\inf N$,
and let $N$ be a bounded DG-module with $\amp N = l$.

According to \cite[Page 299]{FIJ}, 
using truncations, the DG-module $N$ fits into a distinguished triangle

\begin{equation}\label{eqn:triangle1}
N' \to N \to N'' \to N[1]
\end{equation}
in $\cat{D}(A)$,  such that the $\amp N' < l$, $\amp N'' <l$,
and moreover $\inf N' \ge \inf N$ and $\inf N'' \ge \inf N$.
Applying the contravariant triangulated functor $\RHom_A(-,M)$
to the triangle (\ref{eqn:triangle1}), 
we obtain a distinguished triangle

\begin{equation}\label{eqn:triangle2}
\RHom_A(N'',M) \to \RHom_A(N,M) \to \RHom_A(N',M) \to \RHom_A(N'',M)[1]
\end{equation}
The result now follows from the induction hypothesis and the long exact sequence in cohomology
associated to the distinguished triangle (\ref{eqn:triangle2}).

\end{proof}

Before stating the next lemma, we shall need the following terminology:

\begin{rem}\label{rem:term}
If $A$ is a commutative DG-ring,
and $\bar{\a} \subseteq \bar{A}$ is a finitely generated ideal,
we can form the local cohomology functor of $A$ with respect to $\bar{\a}$
and the local cohomology functor of $\bar{A}$ with respect to $\bar{\a}$.
The former is a functor $\cat{D}(A) \to \cat{D}(A)$, 
while the latter is a functor $\cat{D}(\bar{A}) \to \cat{D}(\bar{A})$.

According to our notations, both should be denoted by $\mrm{R}\Gamma_{\bar{\a}}$.
In cases where there will be such ambiguity, we solve it by using the notation
\[
\mrm{R}\Gamma_{\bar{\a}}^{\bar{A}} : \cat{D}(\bar{A}) \to \cat{D}(\bar{A})
\]
for the local cohomology functor of $\bar{A}$ with respect to $\bar{\a}$,
and the notation
\[
\mrm{R}\Gamma_{\bar{\a}}^A : \cat{D}(A) \to \cat{D}(A)
\]
for the local cohomology functor of $A$ with respect to $\bar{\a}$.
Similarly, we will write
\[
\mrm{L}\Lambda_{\bar{\a}}^{\bar{A}} : \cat{D}(\bar{A}) \to \cat{D}(\bar{A})
\]
for the derived completion functor of $\bar{A}$ with respect to $\bar{\a}$,
and
\[
\mrm{L}\Lambda_{\bar{\a}}^{A} : \cat{D}(A) \to \cat{D}(A)
\]
for the derived completion functor of $A$ with respect to $\bar{\a}$.
\end{rem}

\begin{lem}\label{lem:rgammaofrhom}
Let $A$ be a commutative noetherian DG-ring,
and let $\bar{\a} \subseteq \mrm{H}^0(A)$ be an ideal.
Then for any DG-module $M$ over $A$ with bounded cohomology,
there is a natural isomorphism
\begin{equation*}
\RGamma_{\bar{\a}}^{\bar{A}} \left(\RHom_A(\bar{A},M)\right) \cong \RHom_A\left(\bar{A},\RGamma_{\bar{\a}}^{A}(M)\right)
\end{equation*}
in $\cat{D}(\bar{A})$.
\end{lem}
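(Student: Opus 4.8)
The plan is to exploit the explicit telescope formula \eqref{eqn:telescope}, which expresses $\RGamma_{\bar{\a}}$ on both $A$ and $\bar{A}$ as tensoring with a bounded complex of finite free modules over $A^0$ and over $\bar{A}$ respectively. First I would fix a finite generating sequence $\bar{\mathbf{a}}=(\bar a_1,\dots,\bar a_n)$ of $\bar{\a}$ and lifts $\mathbf{a}$ to $A^0$, so that $\RGamma_{\bar{\a}}^{A}(M)\cong \opn{Tel}(A^0;\mathbf{a})\otimes_{A^0}M$ and $\RGamma_{\bar{\a}}^{\bar{A}}(-)\cong \opn{Tel}(\bar{A};\bar{\mathbf{a}})\otimes_{\bar{A}}-$. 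The base change property of the telescope complex gives $\opn{Tel}(A^0;\mathbf{a})\otimes_{A^0}\bar{A}\cong \opn{Tel}(\bar{A};\bar{\mathbf{a}})$, so the right-hand side becomes $\bigl(\opn{Tel}(A^0;\mathbf{a})\otimes_{A^0}\bar{A}\bigr)\otimes^{\mrm{L}}_{\bar{A}}\RHom_A(\bar{A},M)$, and since $\opn{Tel}(A^0;\mathbf{a})$ is a bounded complex of free $A^0$-modules, this is just $\opn{Tel}(A^0;\mathbf{a})\otimes_{A^0}\RHom_A(\bar{A},M)$, i.e. $\RGamma_{\bar{\a}}^{A}$ applied (via the telescope) to $\RHom_A(\bar{A},M)$.

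Thus the statement reduces to producing a natural isomorphism
\[
\opn{Tel}(A^0;\mathbf{a})\otimes_{A^0}\RHom_A(\bar{A},M)\;\cong\;\RHom_A\bigl(\bar{A},\,\opn{Tel}(A^0;\mathbf{a})\otimes_{A^0}M\bigr),
\]
which is exactly a tensor-evaluation statement: $K:=\opn{Tel}(A^0;\mathbf{a})$ is a bounded complex of finite free $A^0$-modules, hence has finite flat dimension over $A$ (via $A^0\to A$, or directly since $\RGamma_{\bar{\a}}$ has finite cohomological amplitude), so Proposition \ref{prop:eval}(1) applies with $M_{\mathrm{there}}=\bar{A}\in\cat{D}^{-}_{\mrm{f}}(A)$, $N_{\mathrm{there}}=M\in\cat{D}^{\mrm{b}}(A)$ and $K_{\mathrm{there}}=K$. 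The one subtlety is that \eqref{eqn:telescope} tensors over $A^0$ rather than over $A$; I would handle this by noting $\opn{Tel}(A^0;\mathbf{a})\otimes_{A^0}(-)\cong\bigl(\opn{Tel}(A^0;\mathbf{a})\otimes_{A^0}A\bigr)\otimes^{\mrm{L}}_{A}(-)=\opn{Tel}(A;\mathbf{a})\otimes^{\mrm{L}}_{A}(-)$ on $\cat{D}(A)$, again by base change of the telescope complex, so everything can be rewritten as a derived tensor over $A$ and the tensor-evaluation morphism $\eta$ of the previous subsection is literally the comparison map.

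The main obstacle, and the only place requiring genuine care, is checking that $\flatdim_A\bigl(\opn{Tel}(A;\mathbf{a})\bigr)<\infty$ so that Proposition \ref{prop:eval}(1) is applicable: $\opn{Tel}(A;\mathbf{a})$ is a bounded complex of \emph{free but infinite-rank} $A$-modules, so it is K-flat and its flat dimension is bounded by the length of the complex, namely $n$; I would spell this out since infinite direct sums of $A$ are flat and a bounded complex of flats has finite flat dimension. Once that is in place, naturality of the isomorphism in $M$ is inherited from naturality of $\eta_{\bar{A},M,\opn{Tel}(A;\mathbf{a})}$ in $M$ together with naturality of the base-change isomorphisms for the telescope complex, and the proof is complete.
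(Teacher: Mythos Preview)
Your proposal is correct and follows the same route as the paper: reduce via \eqref{eqn:telescope} and telescope base change to a tensor-evaluation statement, then invoke Proposition~\ref{prop:eval}(1) with $\bar{A}\in\cat{D}^{-}_{\mrm{f}}(A)$, $M\in\cat{D}^{\mrm{b}}(A)$, and $T:=\opn{Tel}(A^0;\mathbf{a})\otimes_{A^0}A$ of finite flat dimension over $A$. The one point the paper treats more carefully is that the isomorphism must live in $\cat{D}(\bar{A})$, not merely $\cat{D}(A)$: since $\eta_{\bar{A},M,T}$ as defined replaces $\bar{A}$ by a K-projective resolution (which carries no strict $\bar{A}$-module structure), the paper builds a commutative square comparing $\opn{Hom}_A(\bar{A},-)$ in the top row with $\opn{Hom}_A(B,-)$ for a semi-free resolution $B\simeq\bar{A}$ in the bottom row, so that the genuinely $\bar{A}$-linear tensor-evaluation map in the top row inherits the quasi-isomorphism property from Proposition~\ref{prop:eval}(1) applied to the bottom row, via 2-out-of-3.
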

\begin{proof}
Let $\mathbf{a}$ be a finite sequence of elements of the ring $A^0$ whose image in $\bar{A}$ generates the ideal $\bar{\a}$,
and let $\bar{\mathbf{a}}$ be its image in $\bar{A}$. The latter is a finite sequence of elements of the ring $\bar{A}$.
By (\ref{eqn:telescope}), 
there is a natural isomorphism
\[
\RGamma_{\bar{\a}}^{\bar{A}} \left(\RHom_A(\bar{A},M)\right) \cong
\opn{Tel}(\bar{A};\bar{\mathbf{a}}) \otimes_{\bar{A}} \RHom_A(\bar{A},M).
\]
By the base change property of the telescope complex, 
there is an isomorphism of complexes of $\bar{A}$-modules:
\[
\opn{Tel}(A^0;\mathbf{a}) \otimes_{A^0} \bar{A} \cong \opn{Tel}(\bar{A};\bar{\mathbf{a}}).
\]
This implies that
\[
\opn{Tel}(\bar{A};\bar{\mathbf{a}}) \otimes_{\bar{A}} \RHom_A(\bar{A},M) \cong  \opn{Tel}(A^0;\mathbf{a}) \otimes_{A^0} \RHom_A(\bar{A},M)
\]
Set $T := A \otimes_{A^0} \opn{Tel}(A^0;\mathbf{a})$.
Since $\opn{Tel}(A^0;\mathbf{a})$ is a K-flat complex of finite flat dimension over $A^0$,
it follows that the DG-module $T$ is K-flat over $A$ and that $\flatdim_A(T) < \infty$.
Hence, it holds that
\[
\opn{Tel}(A^0;\mathbf{a}) \otimes_{A^0} \RHom_A(\bar{A},M) \cong T \otimes^{\mrm{L}}_A \RHom_A(\bar{A},M).
\]
Let $M \cong I$ be a K-injective resolution over $A$, 
let $I \otimes_A T \cong J$ be a K-injective resolution over $A$,
and let $A \to B \iso \bar{A}$ be a semi-free DG-algebra resolution of $\bar{A}$ over $A$.
In particular, $B$ is K-projective over $A$.
These resolutions and the naturality of the tensor evaluation morphism induce a commutative diagram in $\opn{DGMod}(A)$:
\[
\xymatrixcolsep{2pc}
\xymatrixrowsep{2pc}
\xymatrix{
\opn{Hom}_A(\bar{A},I)\otimes_A T \ar[r]^{\beta}\ar[d]_{\alpha} & \opn{Hom}_A(\bar{A},I\otimes_A T) \ar[r]^{\varphi}\ar[d]_{\delta} & \opn{Hom}_A(\bar{A},J)\ar[d]^{\psi}\\
\opn{Hom}_A(B,I)\otimes_A T \ar[r]_{\gamma} & \opn{Hom}_A(B,I\otimes_A T) \ar[r]_{\chi} & \opn{Hom}_A(B,J)
}
\]
Since $I$ is K-injective over $A$ and $T$ is K-projective over $A$,
it follows that $\alpha$ is a quasi-isomorphism.
Similarly, since $B$ is K-projective over $A$,
it follows that $\chi$ is a quasi-isomorphism.
K-injectivity of $J$ over $A$ implies that $\psi$ is a quasi-isomorphism.
Finally, since $A$ is noetherian and $T$ has finite flat dimension over $A$,
it follows by Proposition \ref{prop:eval}(1) that $\gamma$ is a quasi-isomorphism.
Hence, by the 2-out-of-3 property, the $\bar{A}$-linear map $\varphi \circ \beta$ is also a quasi-isomorphism.
Hence, there are natural isomorphisms
\begin{align*}
\mrm{R}\opn{Hom}_A(\bar{A},M) \otimes_A T \cong \opn{Hom}_A(\bar{A},I)\otimes_A T \cong\\
\opn{Hom}_A(\bar{A},J) \cong \mrm{R}\opn{Hom}_A(\bar{A},M\otimes_A T)
\end{align*}
in $\cat{D}(\bar{A})$.
Finally, note that 
\[
M\otimes_A T = M\otimes_A (A \otimes_{A^0} \opn{Tel}(A^0;\mathbf{a})) \cong M\otimes_{A^0} \opn{Tel}(A^0;\mathbf{a}) \cong \RGamma_{\bar{\a}}^{A}(M)
\]
Combining all the above natural isomorphisms gives the required result.
\end{proof}

We now prove the first main result of this paper.

\begin{thm}\label{thm:main}
Let $A$ be a commutative noetherian DG-ring,
and let $\bar{\a} \subseteq \mrm{H}^0(A)$ be an ideal.
Then for any $M \in \cat{D}^{\mrm{b}}(A)$, there is an inequality
\[
\injdim_A \left(\mrm{R}\Gamma_{\bar{\a}}(M)\right) \le \injdim_A (M).
\] 
\end{thm}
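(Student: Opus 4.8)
The plan is to reduce the statement over the DG-ring $A$ to the known statement over the ordinary noetherian ring $\bar{A}$, using Theorem \ref{thm:injDG} as the bridge. By Theorem \ref{thm:injDG}, the quantity $\injdim_A(N)$ for any $N\in\cat{D}(A)$ equals $\injdim_{\bar A}\bigl(\RHom_A(\bar A,N)\bigr)$. Applying this with $N = \RGamma_{\bar{\a}}^A(M)$, the left-hand side of the desired inequality becomes
\[
\injdim_A\bigl(\RGamma_{\bar{\a}}^A(M)\bigr) = \injdim_{\bar A}\Bigl(\RHom_A\bigl(\bar A,\RGamma_{\bar{\a}}^A(M)\bigr)\Bigr),
\]
and applying it to $N = M$ itself, the right-hand side is $\injdim_{\bar A}\bigl(\RHom_A(\bar A,M)\bigr)$. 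So it suffices to compare these two injective dimensions over $\bar A$.

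\textbf{The key step.}
Here Lemma \ref{lem:rgammaofrhom} does the work: since $M\in\cat{D}^{\mrm{b}}(A)$, there is a natural isomorphism
\[
\RHom_A\bigl(\bar A,\RGamma_{\bar{\a}}^A(M)\bigr) \cong \RGamma_{\bar{\a}}^{\bar A}\bigl(\RHom_A(\bar A,M)\bigr)
\]
in $\cat{D}(\bar A)$. Thus the problem is reduced to showing
\[
\injdim_{\bar A}\Bigl(\RGamma_{\bar{\a}}^{\bar A}\bigl(\RHom_A(\bar A,M)\bigr)\Bigr) \le \injdim_{\bar A}\bigl(\RHom_A(\bar A,M)\bigr).
\]
Now $\bar A$ is an ordinary commutative noetherian ring and $\RHom_A(\bar A,M)$ is an object of $\cat{D}(\bar A)$; I would note it lies in $\cat{D}^{\mrm b}(\bar A)$ (since $M\in\cat{D}^{\mrm b}(A)$, $A$ is noetherian, and $\bar A\in\cat{D}^{-}_{\mrm f}(A)$, so $\RHom_A(\bar A,M)$ has bounded cohomology — alternatively, if $\injdim_{\bar A}(\RHom_A(\bar A,M))=\infty$ there is nothing to prove, and otherwise boundedness is not even needed for the classical statement to apply after a truncation argument). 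Then Corollary \ref{cor:inj}, the classical statement over noetherian rings, gives exactly the inequality
\[
\injdim_{\bar A}\bigl(\RGamma_{\bar{\a}}^{\bar A}(X)\bigr)\le\injdim_{\bar A}(X)
\]
for $X=\RHom_A(\bar A,M)\in\cat{D}^{\mrm b}(\bar A)$, which is what we need.

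\textbf{Main obstacle.}
The only genuine subtlety is the applicability of Lemma \ref{lem:rgammaofrhom} and of Corollary \ref{cor:inj}, both of which require a boundedness hypothesis: Lemma \ref{lem:rgammaofrhom} needs $M$ to have bounded cohomology (which is given), and Corollary \ref{cor:inj} needs $\RHom_A(\bar A,M)\in\cat{D}^{\mrm b}(\bar A)$. If the latter is not automatic, one should either first dispose of the trivial case $\injdim_A(M)=\infty$ and then invoke finiteness to control the cohomology of $\RHom_A(\bar A,M)$, or argue directly with the definition of injective dimension via vanishing of $\Ext$. Apart from this bookkeeping the proof is a short chain of the three cited results.

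\begin{proof}
If $\injdim_A(M)=\infty$ there is nothing to prove, so assume $\injdim_A(M)<\infty$.
By Theorem \ref{thm:injDG} applied to the DG-module $\RGamma_{\bar{\a}}(M)$ over $A$, we have
\[
\injdim_A\bigl(\RGamma_{\bar{\a}}(M)\bigr)=\injdim_{\bar A}\Bigl(\RHom_A\bigl(\bar A,\RGamma_{\bar{\a}}(M)\bigr)\Bigr).
\]
Since $M\in\cat{D}^{\mrm b}(A)$, Lemma \ref{lem:rgammaofrhom} gives a natural isomorphism
\[
\RHom_A\bigl(\bar A,\RGamma_{\bar{\a}}^A(M)\bigr)\cong\RGamma_{\bar{\a}}^{\bar A}\bigl(\RHom_A(\bar A,M)\bigr)
\]
in $\cat{D}(\bar A)$, hence
\[
\injdim_A\bigl(\RGamma_{\bar{\a}}(M)\bigr)=\injdim_{\bar A}\Bigl(\RGamma_{\bar{\a}}^{\bar A}\bigl(\RHom_A(\bar A,M)\bigr)\Bigr).
\]
Again by Theorem \ref{thm:injDG}, now applied to $M$ itself,
\[
\injdim_{\bar A}\bigl(\RHom_A(\bar A,M)\bigr)=\injdim_A(M)<\infty.
\]
In particular, $\RHom_A(\bar A,M)\in\cat{D}^{\mrm b}(\bar A)$: indeed $\RHom_A(\bar A,M)$ has bounded-above cohomology because $\bar A\in\cat{D}^{-}_{\mrm f}(A)$ and $M\in\cat{D}^{+}(A)$, and it has bounded-below cohomology since its injective dimension over $\bar A$ is finite.
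Therefore Corollary \ref{cor:inj}, applied over the commutative noetherian ring $\bar A$ to the object $\RHom_A(\bar A,M)\in\cat{D}^{\mrm b}(\bar A)$ and the ideal $\bar{\a}$, yields
\[
\injdim_{\bar A}\Bigl(\RGamma_{\bar{\a}}^{\bar A}\bigl(\RHom_A(\bar A,M)\bigr)\Bigr)\le\injdim_{\bar A}\bigl(\RHom_A(\bar A,M)\bigr).
\]
Combining the displayed equalities and this inequality gives
\[
\injdim_A\bigl(\RGamma_{\bar{\a}}(M)\bigr)\le\injdim_A(M),
\]
as required.
\end{proof}
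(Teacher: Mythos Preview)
Your proof is correct and follows essentially the same route as the paper: apply Theorem \ref{thm:injDG} to $\RGamma_{\bar{\a}}(M)$, use Lemma \ref{lem:rgammaofrhom} to commute $\RHom_A(\bar A,-)$ past $\RGamma_{\bar{\a}}$, invoke Corollary \ref{cor:inj} over the noetherian ring $\bar A$, and close with Theorem \ref{thm:injDG} applied to $M$. You add a verification that $\RHom_A(\bar A,M)\in\cat{D}^{\mrm b}(\bar A)$ which the paper leaves implicit; note, however, that in that sentence you have interchanged ``above'' and ``below'': the hypothesis $\bar A\in\cat{D}^{-}(A)$, $M\in\cat{D}^{+}(A)$ yields bounded-\emph{below} cohomology for $\RHom_A(\bar A,M)$, while finiteness of its injective dimension over $\bar A$ forces bounded-\emph{above} cohomology (take $N=\bar A$ in the definition).
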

\begin{proof}
According to Theorem \ref{thm:injDG}, 
we have that
\begin{align*}
\injdim_A \left(\RGamma_{\bar{\a}} (M)\right) = \injdim_A \left(\RGamma^A_{\bar{\a}} (M)\right) =\\
\injdim_{\bar{A}} \left(\RHom_A(\bar{A},\RGamma^A_{\bar{\a}} (M))\right).
\end{align*}
Using Lemma \ref{lem:rgammaofrhom}, 
we have that
\begin{align*}
 \injdim_{\bar{A}} \left(\RHom_A(\bar{A},\RGamma^A_{\bar{\a}} (M))\right) =\\
 \injdim_{\bar{A}} \left(\RGamma^{\bar{A}}_{\bar{\a}} (\RHom_A(\bar{A}, M))\right)
\end{align*}
and by Corollary \ref{cor:inj} we obtain that
\begin{align*}
\injdim_{\bar{A}} \left(\RGamma^{\bar{A}}_{\bar{\a}} (\RHom_A(\bar{A}, M))\right) \le\\
\injdim_{\bar{A}} \left(\RHom_A(\bar{A}, M)\right) = \injdim_A (M)
\end{align*}
where the last equality follows from Theorem \ref{thm:injDG}.
Combining all of the above, we obtain that
\[
\injdim_A \left(\RGamma_{\bar{\a}} (M)\right) \le \injdim_A (M)
\]
as claimed.

\end{proof}

\begin{rem}\label{exa:fails}
Given a commutative ring $A$, 
and a finitely generated ideal $\a\subseteq{A}$,
unlike Remark \ref{rem:noetherianring}, 
if $A$ is non-noetherian
in general the right derived functor of the $\a$-torsion functor $\Gamma_{\a}$
might be different from the local cohomology functor with respect to $\a$ from Section \ref{seclc}.
However, if $\a$ satisfies a technical condition called weak proregularity (see \cite[Definition 4.21]{PSY1}),
these functors coincide.
In \cite[Proposition 3.1]{QR},
there is an example of a commutative ring $A$,
a finitely generated (in fact, principal) weakly proregular ideal $\a\subseteq A$,
and an injective $A$-module $I$ such that $\Gamma_{\a}(I)$ is not an injective $A$-module.
\end{rem}

\section{Flat dimension of derived completion}

The aim of this section is to prove Theorem \ref{thm:mainB}.
The next result is dual to Theorem \ref{thm:injDG}.

\begin{thm}\label{thm:flatDG}
Let $A$ be a commutative DG-ring, and let $M$ be a DG-module over $A$.
Then there is an equality
\[
\flatdim_A (M)  = \flatdim_{\mrm{H}^0(A)} \left( \mrm{H}^0(A) \otimes^{\mrm{L}}_A M \right)
\]
\end{thm}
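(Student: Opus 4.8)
The plan is to dualize the proof of Theorem \ref{thm:injDG} throughout, replacing $\RHom$ by $\otimes^{\mrm{L}}$, injective dimension by flat dimension, and $\inf$-based amplitude bookkeeping by the corresponding statements. First I would invoke a dual of Proposition \ref{prop:injdim-of-RHom}: for the homomorphism $A \to \bar{A}$ and the base-change functor $\bar{A} \otimes^{\mrm{L}}_A -$, one has $\flatdim_{\bar{A}}(\bar{A} \otimes^{\mrm{L}}_A M) \le \flatdim_A(M)$; this follows from the associativity isomorphism $N \otimes^{\mrm{L}}_{\bar{A}} (\bar{A} \otimes^{\mrm{L}}_A M) \simeq N \otimes^{\mrm{L}}_A M$ for $N \in \cat{D}(\bar{A})$, exactly as in the proof of Proposition \ref{prop:injdim-of-RHom}. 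This gives the inequality $\flatdim_A(M) \ge \flatdim_{\bar{A}}(\bar{A}\otimes^{\mrm{L}}_A M)$ for free.

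For the reverse inequality, I would assume $\flatdim_{\bar{A}}(\bar{A}\otimes^{\mrm{L}}_A M) = n < \infty$ and show that $\Tor^i_A(N,M) = 0$ for every bounded DG-module $N$ and every $i > n - \inf N$, arguing by induction on $\amp N$. In the base case $\amp N = 0$ we have $N \simeq \mrm{H}^m(N)[-m]$ for some $m$, and since $\mrm{H}^m(N)$ is an $\bar{A}$-module the associativity isomorphism gives $N \otimes^{\mrm{L}}_A M \simeq (\mrm{H}^m(N)[-m]) \otimes^{\mrm{L}}_{\bar{A}} (\bar{A} \otimes^{\mrm{L}}_A M)$, so the bound on $\flatdim_{\bar{A}}(\bar{A}\otimes^{\mrm{L}}_A M)$ yields the required vanishing. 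For the inductive step I would use the same truncation triangle $N' \to N \to N'' \to N'[1]$ from \cite[Page 299]{FIJ} with $\amp N', \amp N'' < \amp N$ and $\inf N', \inf N'' \ge \inf N$, apply the triangulated functor $- \otimes^{\mrm{L}}_A M$ to it, and conclude from the long exact Tor sequence together with the induction hypothesis — noting that $- \otimes^{\mrm{L}}_A M$ is covariant, so the triangle's arrows are not reversed, but the amplitude/$\inf$ estimates are what matter and they are symmetric in the two approaches.

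I do not expect any serious obstacle here; the argument is a routine dualization and all the needed inputs (the associativity/base-change isomorphism for $\otimes^{\mrm{L}}$, the truncation triangle, and the DG definition of flat dimension via the $\inf N$-shifted vanishing of $\Tor$) are already available. The one point requiring mild care is the bookkeeping of the index shift: the flat dimension is defined so that $\flatdim_A(M) \le n$ iff $\Tor^i_A(N,M) = 0$ for all bounded $N$ and all $i > n - \inf N$, and one must check that the base-change isomorphism in the $\amp N = 0$ case is compatible with this shift — i.e. that $\inf$ of $\mrm{H}^m(N)[-m]$ as a $\bar{A}$-module matches what the $\bar{A}$-side bound requires — which is immediate since the shift $[-m]$ is the same on both sides. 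Closing the induction then gives $\flatdim_A(M) \le n$, hence equality.
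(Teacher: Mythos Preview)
Your proposal is correct and follows essentially the same approach as the paper: the paper establishes the inequality $\ge$ via the associativity isomorphism $N \otimes^{\mrm{L}}_A M \cong N\otimes^{\mrm{L}}_{\bar{A}} ( \bar{A} \otimes^{\mrm{L}}_A M )$, handles the base case $\amp(N)=0$ exactly as you describe, and then says to proceed by induction on $\amp(N)$ ``exactly as in the proof of Theorem \ref{thm:injDG}'' using the truncation triangle. Your write-up is slightly more detailed about the inductive step, but the arguments are the same.
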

\begin{proof}
For any $N \in \cat{D}(\bar{A})$,
the isomorphism
\[
N \otimes^{\mrm{L}}_A M \cong N\otimes^{\mrm{L}}_{\bar{A}} ( \bar{A} \otimes^{\mrm{L}}_A M )
\]
shows that 
\[
\flatdim_A (M) \ge \flatdim_{\bar{A}} \left( \bar{A} \otimes^{\mrm{L}}_A M \right)
\]
To prove the converse, assume that $\flatdim_{\bar{A}} \left( \bar{A} \otimes^{\mrm{L}}_A M \right) = n < \infty$,
and let $N$ be a bounded DG-module.
If $\amp(N) = 0$, there is an isomorphism $N \cong \mrm{H}^m(N)[-m]$ for some $m \in \mathbb{Z}$,
which implies that
\[
M \otimes^{\mrm{L}}_A N \cong \left(M\otimes^{\mrm{L}}_A \bar{A}\right) \otimes^{\mrm{L}}_{\bar{A}} \mrm{H}^m(N)[-m]
\]
Hence, the fact that 
\[
\flatdim_{\bar{A}} \left( \bar{A} \otimes^{\mrm{L}}_A M \right) = n
\]
implies that in this case $\Tor^i_A(N,M) = 0$ for all $i>n-\inf N$.
Now, proceeding by induction on $\amp(N)$,
exactly as in the proof of Theorem \ref{thm:injDG}, 
we obtain the general case for an arbitrary bounded DG-module $N$.
\end{proof}

We will now use again the terminology introduced in Remark \ref{rem:term}.

\begin{lem}\label{lem:LLambda}
Let $A$ be a commutative noetherian DG-ring,
and let $\bar{\a} \subseteq \mrm{H}^0(A)$ be an ideal.
Then for any DG-module $M$ over $A$ with bounded cohomology,
there is a natural isomorphism
\[
\mrm{L}\Lambda^{\bar{A}}_{\bar{\a}} \left(\bar{A} \otimes^{\mrm{L}}_A M \right) \cong    \bar{A} \otimes^{\mrm{L}}_A \left( \mrm{L}\Lambda^{A}_{\bar{\a}}(M) \right)
\]
in $\cat{D}(\bar{A})$.
\end{lem}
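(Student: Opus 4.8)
The plan is to mirror the proof of Lemma \ref{lem:rgammaofrhom}, using the explicit telescope formula \eqref{eqn:telescope2} for $\mrm{L}\Lambda_{\bar{\a}}$ and the base-change property of the telescope complex, with the tensor-evaluation isomorphism of Proposition \ref{prop:eval}(2) playing the role that Proposition \ref{prop:eval}(1) played before. Concretely, let $\mathbf{a}$ be a finite sequence in $A^0$ lifting a generating sequence $\bar{\mathbf{a}}$ of $\bar{\a}$ in $\bar{A}$. By \eqref{eqn:telescope2}, $\mrm{L}\Lambda^{\bar{A}}_{\bar{\a}}(\bar{A}\otimes^{\mrm{L}}_A M) \cong \opn{Hom}_{\bar{A}}(\opn{Tel}(\bar{A};\bar{\mathbf{a}}), \bar{A}\otimes^{\mrm{L}}_A M)$, and by the base-change property $\opn{Tel}(\bar{A};\bar{\mathbf{a}}) \cong \opn{Tel}(A^0;\mathbf{a})\otimes_{A^0}\bar{A}$, so a hom-tensor adjunction rewrites this as $\opn{Hom}_{A^0}(\opn{Tel}(A^0;\mathbf{a}), \bar{A}\otimes^{\mrm{L}}_A M)$. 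On the other side, setting $T := A\otimes_{A^0}\opn{Tel}(A^0;\mathbf{a})$ — which, as noted already, is K-flat over $A$ of finite flat dimension, and is in fact a bounded complex of finite free $A^0$-modules, hence a perfect complex (so $\projdim_{A^0}\opn{Tel}(A^0;\mathbf{a})<\infty$) — we have $\mrm{L}\Lambda^A_{\bar{\a}}(M) \cong \opn{Hom}_{A^0}(\opn{Tel}(A^0;\mathbf{a}), M) \cong \mrm{R}\opn{Hom}_A(T, M)$, so that $\bar{A}\otimes^{\mrm{L}}_A \mrm{L}\Lambda^A_{\bar{\a}}(M) \cong \bar{A}\otimes^{\mrm{L}}_A \mrm{R}\opn{Hom}_A(T,M)$.

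The point is therefore to produce a natural isomorphism
\[
\bar{A}\otimes^{\mrm{L}}_A \mrm{R}\opn{Hom}_A(T,M) \cong \mrm{R}\opn{Hom}_A(T, \bar{A}\otimes^{\mrm{L}}_A M),
\]
since the right-hand side, by adjunction $\mrm{R}\opn{Hom}_A(T,-) = \opn{Hom}_{A^0}(\opn{Tel}(A^0;\mathbf{a}), -)$ and the same hom-tensor manipulation as above, is exactly $\opn{Hom}_{A^0}(\opn{Tel}(A^0;\mathbf{a}), \bar{A}\otimes^{\mrm{L}}_A M)$, matching the first computation. To get this isomorphism I would invoke the tensor-evaluation morphism $\eta_{T,M,\bar{A}}$ of Section 1.5. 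To apply Proposition \ref{prop:eval}(2) with $(M,N,K) = (T,M,\bar{A})$ I need: $\projdim_A(T)<\infty$, which holds because $T$ is obtained from the perfect complex $\opn{Tel}(A^0;\mathbf{a})$ by base change along $A^0\to A$; $M\in\cat{D}^{\mrm{b}}(A)$, which is the hypothesis; and $\bar{A}\in\cat{D}^-_{\mrm{f}}(A)$, which holds since $A$ is noetherian (so $\bar{A}=\mrm{H}^0(A)$ is a finitely generated $\bar{A}$-module, concentrated in one degree). Then $\eta_{T,M,\bar{A}}$ is an isomorphism and we are done, after checking that all the identifications are natural in $M$ — which follows from the naturality of the telescope base-change isomorphism, of the hom-tensor adjunctions, and of $\eta$.

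The one point requiring a little care, and the analogue of the diagram chase in Lemma \ref{lem:rgammaofrhom}, is making sure the abstract isomorphism $\eta_{T,M,\bar{A}}$ genuinely computes $\bar{A}\otimes^{\mrm{L}}_A\mrm{L}\Lambda^A_{\bar{\a}}(M)$ on the nose, i.e.\ that replacing $\mrm{L}\Lambda^A_{\bar{\a}}$ by $\mrm{R}\opn{Hom}_A(T,-)$ and $\bar{A}\otimes^{\mrm{L}}_A(-)$ by tensoring with a semi-free resolution $B\iso\bar{A}$ over $A$ is compatible with the telescope models. Since $\opn{Tel}(A^0;\mathbf{a})$ is already a bounded complex of \emph{free} $A^0$-modules, $T$ is K-flat and K-projective over $A$, so $\opn{Hom}_A(T,I)$ computes $\mrm{R}\opn{Hom}_A(T,M)$ for a K-injective resolution $M\cong I$, and one can run essentially the same commutative square as in Lemma \ref{lem:rgammaofrhom} with the roles of $B$ and $T$ swapped; the main obstacle, such as it is, is purely bookkeeping of these resolutions rather than anything substantive. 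Alternatively, one may simply cite Proposition \ref{prop:eval}(2) directly at the level of the derived category and let naturality of $\eta$ take care of everything, which is the route I would prefer in the writeup.
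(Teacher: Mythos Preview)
Your approach is correct and essentially identical to the paper's: both rewrite $\mrm{L}\Lambda^{\bar{A}}_{\bar{\a}}(\bar{A}\otimes^{\mrm{L}}_A M)$ via the telescope formula and base change, set $T=A\otimes_{A^0}\opn{Tel}(A^0;\mathbf{a})$, and invoke Proposition~\ref{prop:eval}(2) with $(T,M,\bar{A})$; the paper in fact performs exactly the explicit resolution diagram you sketch, in order to record that the resulting quasi-isomorphism is $\bar{A}$-linear and hence lives in $\cat{D}(\bar{A})$.

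One minor correction: $\opn{Tel}(A^0;\mathbf{a})$ is a bounded complex of \emph{countably generated} free $A^0$-modules (each term is $\bigoplus_{i\ge 0}A^0$), so it is \emph{not} a perfect complex as you claim. This does not affect your argument, since a bounded complex of free modules still has finite projective dimension, which is all that Proposition~\ref{prop:eval}(2) requires; but the justification ``finite free, hence perfect'' should be replaced accordingly.
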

\begin{proof}
Let $\mathbf{a}$ be a finite sequence of elements of the ring $A^0$ whose image in $\bar{A}$ generates the ideal $\bar{\a}$,
and let $\bar{\mathbf{a}}$ be its image in $\bar{A}$. The latter is a finite sequence of elements of the ring $\bar{A}$.
By (\ref{eqn:telescope2}), there is a natural isomorphism 
\[
\mrm{L}\Lambda^{\bar{A}}_{\bar{\a}} \left(\bar{A} \otimes^{\mrm{L}}_A M \right) \cong \opn{Hom}_{\bar{A}}(\opn{Tel}(\bar{A};\bar{\mathbf{a}}), \bar{A} \otimes^{\mrm{L}}_A M)
\]
As in the proof of Lemma \ref{lem:rgammaofrhom}, setting $T := \opn{Tel}(A^0;\mathbf{a}) \otimes_{A^0} A$,
observing that $T$ is K-projective over $A$,
and using adjunctions,
this is naturally isomorphic to
\[
\opn{Hom}_A(T, \bar{A} \otimes^{\mrm{L}}_A M).
\]
Let $P \cong M$ be a K-flat resolution over $A$,
let $F \cong \opn{Hom}_A(T,P)$ be a K-flat resolution over $A$,
and let $A \to B \cong \bar{A}$ be a semi-free DG-algebra resolution of $\bar{A}$ over $A$.
We obtain a commutative diagram in $\opn{DGMod}(A)$:
\[
\xymatrixcolsep{3pc}
\xymatrixrowsep{3pc}
\xymatrix{
F \otimes_A \bar{A} \ar[r]^{\beta} & \opn{Hom}_A(T,P)\otimes_A \bar{A} \ar[r]^{\varphi} & \opn{Hom}_A(T,P\otimes_A \bar{A})\\
F \otimes_A B \ar[r]_{\gamma} \ar[u]^{\alpha} & \opn{Hom}_A(T,P)\otimes_A B \ar[r]_{\chi} \ar[u]^{\delta} & \opn{Hom}_A(T,P\otimes_A B) \ar[u]_{\psi}
}
\]
The fact that $F$ is K-flat over $A$ implies that $\alpha$ is a quasi-isomorphism,
while K-flatness of $B$ over $A$ implies that $\gamma$ is a quasi-isomorphism.
The fact that $P$ is K-flat over $A$ and $T$ is K-projective over $A$ implies that $\psi$ is a quasi-isomorphism.
Finally, since $A$ is noetherian and $\projdim_A(T) < \infty$,
by Proposition \ref{prop:eval}(2), the map $\chi$ is a quasi-isomorphism.
It follows by the 2-out-of-3 property that the $\bar{A}$-linear map $\varphi \circ \beta$ is a quasi-isomorphism.
Hence, there are natural isomorphisms
\begin{align*}
\mrm{R}\opn{Hom}_A(T,M)\otimes^{\mrm{L}}_A \bar{A} \cong F\otimes_A \bar{A} \cong\\
\opn{Hom}_A(T,P\otimes_A \bar{A}) \cong \mrm{R}\opn{Hom}_A(T,M\otimes^{\mrm{L}}_A \bar{A})
\end{align*}
in $\cat{D}(\bar{A})$.
The result now follows by combining all these isomorphisms and using the fact that there is a natural isomorphism
\[
\mrm{L}\Lambda^A_{\bar{\a}}(M) \cong \mrm{R}\opn{Hom}_A(T,M)
\]
in $\cat{D}(A)$.
\end{proof}

Here is the second main result of this paper.

\begin{thm}\label{thm:main2}
Let $A$ be a commutative noetherian DG-ring,
and let $\bar{\a} \subseteq \mrm{H}^0(A)$ be an ideal.
Then for any $M \in \cat{D}^{\mrm{b}}(A)$, there is an inequality
\[
\flatdim_A \left(\mrm{L}\Lambda_{\bar{\a}}(M)\right) \le \flatdim_A (M).
\] 
\end{thm}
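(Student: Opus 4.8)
The plan is to mirror, step by step, the proof of Theorem \ref{thm:main}, replacing $\RHom_A(\bar{A},-)$ by $\bar{A}\otimes^{\mrm{L}}_A-$, injective dimension by flat dimension, local cohomology by derived completion, and Corollary \ref{cor:inj} by Corollary \ref{cor:flat}. Concretely, the three ingredients are already in place: Theorem \ref{thm:flatDG} reduces the computation of flat dimension over $A$ to flat dimension over $\bar{A}$ after applying $\bar{A}\otimes^{\mrm{L}}_A-$; Lemma \ref{lem:LLambda} provides the crucial commutation $\mrm{L}\Lambda^{\bar{A}}_{\bar{\a}}(\bar{A}\otimes^{\mrm{L}}_A M)\cong \bar{A}\otimes^{\mrm{L}}_A(\mrm{L}\Lambda^{A}_{\bar{\a}}(M))$; and Corollary \ref{cor:flat} handles the classical ring case $\bar{A}$.

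The steps, in order, are as follows. First, invoke Theorem \ref{thm:flatDG} to write $\flatdim_A(\mrm{L}\Lambda_{\bar{\a}}(M)) = \flatdim_{\bar{A}}(\bar{A}\otimes^{\mrm{L}}_A \mrm{L}\Lambda^{A}_{\bar{\a}}(M))$, noting that $\mrm{L}\Lambda_{\bar{\a}} = \mrm{L}\Lambda^A_{\bar{\a}}$ in the notation of Remark \ref{rem:term}. Second, apply Lemma \ref{lem:LLambda} to rewrite the argument as $\mrm{L}\Lambda^{\bar{A}}_{\bar{\a}}(\bar{A}\otimes^{\mrm{L}}_A M)$; here one uses that $M\in\cat{D}^{\mrm{b}}(A)$ so the lemma applies. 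Third, apply Corollary \ref{cor:flat} over the noetherian ring $\bar{A}$ to the object $\bar{A}\otimes^{\mrm{L}}_A M$, which lies in $\cat{D}^{\mrm{b}}(\bar{A})$ (since $A$ is noetherian and $M$ has bounded cohomology, $\bar{A}\otimes^{\mrm{L}}_A M$ has bounded cohomology), to get $\flatdim_{\bar{A}}(\mrm{L}\Lambda^{\bar{A}}_{\bar{\a}}(\bar{A}\otimes^{\mrm{L}}_A M))\le \flatdim_{\bar{A}}(\bar{A}\otimes^{\mrm{L}}_A M)$. Fourth, apply Theorem \ref{thm:flatDG} once more, in the other direction, to identify $\flatdim_{\bar{A}}(\bar{A}\otimes^{\mrm{L}}_A M)$ with $\flatdim_A(M)$. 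Chaining these (in)equalities yields the claim.

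One small point worth checking is that $\bar{A}\otimes^{\mrm{L}}_A M\in\cat{D}^{\mrm{b}}(\bar{A})$ so that Corollary \ref{cor:flat} is applicable: boundedness above of the cohomology is automatic, and boundedness below follows because $A$ is noetherian with $\mrm{H}^i(A)$ finitely generated, so $\bar{A}$ has finite flat dimension as would be needed — but in fact for the reduction all that is required is that $\bar{A}\otimes^{\mrm{L}}_A M$ has bounded cohomology, which can also be extracted directly from the spectral sequence computing it, or simply noted as in the proof of Lemma \ref{lem:LLambda}. I do not expect a genuine obstacle here: the entire argument is the formal dual of Theorem \ref{thm:main}, and every nontrivial input has been isolated into the preceding lemmas. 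If anything, the only care needed is bookkeeping of the $A$ versus $\bar{A}$ superscripts from Remark \ref{rem:term}, which is purely notational.

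\begin{proof}
According to Theorem \ref{thm:flatDG},
\[
\flatdim_A \left(\mrm{L}\Lambda_{\bar{\a}}(M)\right) = \flatdim_A \left(\mrm{L}\Lambda^A_{\bar{\a}}(M)\right) = \flatdim_{\bar{A}} \left(\bar{A}\otimes^{\mrm{L}}_A \mrm{L}\Lambda^A_{\bar{\a}}(M)\right).
\]
Since $M \in \cat{D}^{\mrm{b}}(A)$, Lemma \ref{lem:LLambda} gives a natural isomorphism
\[
\bar{A}\otimes^{\mrm{L}}_A \mrm{L}\Lambda^A_{\bar{\a}}(M) \cong \mrm{L}\Lambda^{\bar{A}}_{\bar{\a}}\left(\bar{A}\otimes^{\mrm{L}}_A M\right),
\]
so that
\[
\flatdim_{\bar{A}} \left(\bar{A}\otimes^{\mrm{L}}_A \mrm{L}\Lambda^A_{\bar{\a}}(M)\right) = \flatdim_{\bar{A}} \left(\mrm{L}\Lambda^{\bar{A}}_{\bar{\a}}\left(\bar{A}\otimes^{\mrm{L}}_A M\right)\right).
\]
The DG-module $\bar{A}\otimes^{\mrm{L}}_A M$ has bounded cohomology (this was observed in the proof of Lemma \ref{lem:LLambda}, and also follows from Theorem \ref{thm:flatDG} together with $\flatdim_A(M) < \infty$ when $M$ has finite flat dimension; in the remaining case both sides of the asserted inequality are infinite and there is nothing to prove). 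Hence $\bar{A}\otimes^{\mrm{L}}_A M \in \cat{D}^{\mrm{b}}(\bar{A})$, and Corollary \ref{cor:flat}, applied over the noetherian ring $\bar{A}$, yields
\[
\flatdim_{\bar{A}} \left(\mrm{L}\Lambda^{\bar{A}}_{\bar{\a}}\left(\bar{A}\otimes^{\mrm{L}}_A M\right)\right) \le \flatdim_{\bar{A}} \left(\bar{A}\otimes^{\mrm{L}}_A M\right).
\]
Finally, by Theorem \ref{thm:flatDG} again,
\[
\flatdim_{\bar{A}} \left(\bar{A}\otimes^{\mrm{L}}_A M\right) = \flatdim_A (M).
\]
Combining all of the above, we conclude that
\[
\flatdim_A \left(\mrm{L}\Lambda_{\bar{\a}}(M)\right) \le \flatdim_A (M)
\]
as claimed.
\end{proof}
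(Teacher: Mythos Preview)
Your proof is correct and follows exactly the same route as the paper: Theorem \ref{thm:flatDG}, then Lemma \ref{lem:LLambda}, then Corollary \ref{cor:flat}, then Theorem \ref{thm:flatDG} again. Your extra care in justifying $\bar{A}\otimes^{\mrm{L}}_A M \in \cat{D}^{\mrm{b}}(\bar{A})$ via the case split on $\flatdim_A(M)$ is a welcome addition (the paper glosses over this point), though note that this boundedness is not actually observed in the proof of Lemma \ref{lem:LLambda} as your parenthetical claims---the case-split argument is what does the work.
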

\begin{proof}
By Theorem \ref{thm:flatDG},
\begin{align*}
\flatdim_A \left(\mrm{L}\Lambda_{\bar{\a}}(M)\right) = \flatdim_A \left(\mrm{L}\Lambda^A_{\bar{\a}}(M)\right) =\\
\flatdim_{\bar{A}} \left( \bar{A}\otimes^{\mrm{L}}_A \mrm{L}\Lambda^A_{\bar{\a}}(M)  \right).
\end{align*}
Using Lemma \ref{lem:LLambda}, we have that
\begin{align*}
\flatdim_{\bar{A}} \left( \bar{A}\otimes^{\mrm{L}}_A \mrm{L}\Lambda^A_{\bar{\a}}(M)  \right) =
\flatdim_{\bar{A}} \left( \mrm{L}\Lambda^{\bar{A}}_{\bar{\a}}  ( \bar{A}\otimes^{\mrm{L}}_A M)  \right)
\end{align*}
and by Corollary \ref{cor:flat} we obtain that
\begin{align*}
\flatdim_{\bar{A}} \left( \mrm{L}\Lambda^{\bar{A}}_{\bar{\a}}  ( \bar{A}\otimes^{\mrm{L}}_A M)  \right) \le
\flatdim_{\bar{A}} \left( \bar{A}\otimes^{\mrm{L}}_A M  \right) = \flatdim_A (M)
\end{align*}
where the last equality follows from Theorem \ref{thm:flatDG}.
Combining all of the above, we obtain that
\[
\flatdim_A \left(\mrm{L}\Lambda_{\bar{\a}}(M)\right) \le \flatdim_A (M)
\]
as claimed.

\end{proof}

\begin{rem}
As in Remark \ref{exa:fails}, this result is false in general if the (DG-)ring is not assumed to be noetherian (even if the ideal is finitely generated by a regular sequence).
See \cite[Theorem 6.2]{Ye3} for an example.
\end{rem}

\begin{rem}
Let $A$ be a commutative noetherian ring
such that the Krull dimension of $A$ is $\ge 1$,
and let $B = A[x]$.
Then $B$ is a projective $B$-module,
and it follows from \cite[Theorem 1]{BF}
that the $B$-module $\Lambda_{(x)}(B)$ is not projective.
\end{rem}

We finish the paper with an important application of Theorem \ref{thm:main2}.
A basic result in commutative algebra states that if $A$ is a commutative noetherian ring,
and $\a\subseteq A$ is an ideal, then the canonical map $A \to \widehat{A}$ from $A$ to its $\a$-adic completion is flat.
Here is the analogue of this result in derived commutative algebra:

Let $A$ be a commutative noetherian DG-ring,
and let $\bar{\a}\subseteq \bar{A}$ be an ideal.
The derived $\bar{\a}$-adic completion of $A$ is a commutative DG-ring,
denoted by $\mrm{L}\Lambda(A,\bar{\a})$. It was defined in \cite[Theorem 0.2]{Sh}.
It is a derived analogue of the $\a$-adic completion $\widehat{A}$.
There is a natural map $A \to \mrm{L}\Lambda(A,\bar{\a})$,
but this is not a map of DG-rings. Instead, it only exists in a suitable homotopy category (see \cite{Sh} for details).
Concretely, one can realize it as follows: 
there is a commutative DG-ring $P$, a quasi-isomorphism $P \to A$,
and a natural map of DG-rings $P \to \mrm{L}\Lambda(A,\bar{\a})$.
Since $P$ and $A$ are quasi-isomorphic, 
the triangulated categories $\cat{D}(A)$ and $\cat{D}(P)$ are isomorphic.
Using this isomorphism and the map $P \to \mrm{L}\Lambda(A,\bar{\a})$,
we can view $\mrm{L}\Lambda(A,\bar{\a})$ as an object of $\cat{D}(A)$.
According to \cite[Proposition 3.58]{Sh}, this object is isomorphic to $\mrm{L}\Lambda_{\bar{\a}}(A)$.

The above paragraph explains that the analogue of the number $\flatdim_A(\widehat{A})$ in derived commutative algebra is the number
$\flatdim_A(\mrm{L}\Lambda_{\bar{\a}}(A))$. This explains the importance of our final result:

\begin{cor}
Let $A$ be a commutative noetherian DG-ring, and assume that $A$ has bounded cohomology.
Let $\bar{\a} \subseteq \mrm{H}^0(A)$ be an ideal.
Then
\[
\flatdim_A(\mrm{L}\Lambda_{\bar{\a}}(A)) = 0.
\]
\end{cor}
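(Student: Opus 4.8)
The plan is to combine Theorem \ref{thm:main2} with the fact that, when $A$ is a commutative noetherian DG-ring, the unit object $A \in \cat{D}(A)$ always has flat dimension $0$. Concretely, the proof would proceed in two short steps. First I would invoke Theorem \ref{thm:main2} with $M = A$: since $A$ has bounded cohomology by assumption, $A \in \cat{D}^{\mrm{b}}(A)$, so the theorem gives
\[
\flatdim_A(\mrm{L}\Lambda_{\bar{\a}}(A)) \le \flatdim_A(A).
\]
Second, I would argue that $\flatdim_A(A) = 0$. Indeed, for any bounded DG-module $N$ over $A$ we have $N \otimes^{\mrm{L}}_A A \cong N$, so $\Tor^i_A(N,A) = \mrm{H}^{-i}(N)$, which vanishes for all $i > -\inf N = 0 - \inf N$ (it vanishes in fact for $i > -\inf N$, i.e. for $-i < \inf N$, which is exactly what is needed). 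Hence $\flatdim_A(A) \le 0$; and since $A$ is not acyclic (it has $\mrm{H}^0(A) \ne 0$, the ring $\bar{A}$ being noetherian hence nonzero, or else the statement is vacuous), $\flatdim_A(A) = 0$. Combining the two steps gives $\flatdim_A(\mrm{L}\Lambda_{\bar{\a}}(A)) \le 0$.

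It then remains to establish the reverse inequality, $\flatdim_A(\mrm{L}\Lambda_{\bar{\a}}(A)) \ge 0$. This holds provided $\mrm{L}\Lambda_{\bar{\a}}(A)$ is not acyclic. One clean way to see this: by the formula \eqref{eqn:telescope2} together with idempotence-type properties of derived completion, or more simply by noting that $\bar{A} \otimes^{\mrm{L}}_A \mrm{L}\Lambda_{\bar{\a}}(A) \cong \mrm{L}\Lambda^{\bar{A}}_{\bar{\a}}(\bar{A})$ via Lemma \ref{lem:LLambda} (with $M = A$), and the right-hand side is the derived $\bar{\a}$-adic completion of the noetherian ring $\bar{A}$, which is a nonzero module (it has $\mrm{H}^0 = \what{\bar{A}} \ne 0$) concentrated in degree $0$. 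Since $\bar{A} \otimes^{\mrm{L}}_A \mrm{L}\Lambda_{\bar{\a}}(A)$ is nonzero, $\mrm{L}\Lambda_{\bar{\a}}(A)$ is nonzero, so its flat dimension is at least $0$ (flat dimension is $-\infty$ only for the zero object). Alternatively, one can avoid the lower-bound discussion altogether by recalling the convention that $\flatdim$ of a nonzero bounded-cohomology DG-module over a nonzero DG-ring is always $\ge 0$, so that the statement $\flatdim_A(\mrm{L}\Lambda_{\bar{\a}}(A)) = 0$ follows immediately once one knows it is $\le 0$ and that the object is nonzero.

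I do not expect any real obstacle here: the corollary is a direct consequence of Theorem \ref{thm:main2} applied to $M = A$, using only the trivial computation $\flatdim_A(A) = 0$ and the (equally easy) non-vanishing of $\mrm{L}\Lambda_{\bar{\a}}(A)$. The only point requiring any care is making sure that "$\flatdim_A(\mrm{L}\Lambda_{\bar{\a}}(A)) \le 0$" is upgraded to an equality, i.e. confirming the completed object is not acyclic; as indicated, this follows either from Lemma \ref{lem:LLambda} and the nonvanishing of the ordinary completion $\what{\bar A}$, or directly from the telescope formula \eqref{eqn:telescope2}.
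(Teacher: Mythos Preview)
Your proposal is essentially the same as the paper's proof: apply Theorem~\ref{thm:main2} with $M=A$ to get $\flatdim_A(\mrm{L}\Lambda_{\bar{\a}}(A)) \le \flatdim_A(A) = 0$, and then argue separately that the flat dimension is also $\ge 0$. For the lower bound the paper invokes \cite[Proposition~6.1]{Sh} to obtain $\mrm{H}^0(\mrm{L}\Lambda_{\bar{\a}}(A)) \cong \Lambda_{\bar{\a}}(\bar{A}) \ne 0$ and from that deduces $\mrm{H}^0(\mrm{L}\Lambda_{\bar{\a}}(A) \otimes^{\mrm{L}}_A \bar{A}) \ne 0$, whereas you reach the same nonvanishing via Lemma~\ref{lem:LLambda}; both routes are fine.

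There is, however, one genuine imprecision in your final step. The assertion that ``a nonzero DG-module has $\flatdim \ge 0$'' (and the ``convention'' you cite as an alternative) is false in general: for instance $\flatdim_A(A[1]) = -1$. So merely concluding that $\mrm{L}\Lambda_{\bar{\a}}(A)$ is not acyclic does not yield $\flatdim \ge 0$. Fortunately, what you actually established is stronger and is exactly what is needed: you showed
\[
\mrm{H}^0\bigl(\bar{A} \otimes^{\mrm{L}}_A \mrm{L}\Lambda_{\bar{\a}}(A)\bigr) \cong \mrm{H}^0\bigl(\mrm{L}\Lambda^{\bar{A}}_{\bar{\a}}(\bar{A})\bigr) \cong \what{\bar A} \ne 0.
\]
Since $\inf(\bar{A}) = 0$, this says $\Tor^0_A(\bar{A},\mrm{L}\Lambda_{\bar{\a}}(A)) \ne 0$, and by the definition of flat dimension this forces $\flatdim_A(\mrm{L}\Lambda_{\bar{\a}}(A)) \ge 0$ directly. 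Rephrase that last sentence accordingly and your argument is complete and matches the paper's.
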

\begin{proof}
Since $\flatdim_A(A) = 0$, it follows from Theorem \ref{thm:main2} that
\[
\flatdim_A \left( \mrm{L}\Lambda_{\bar{\a}}(A) \right) \le 0. 
\]
On the other hand, by \cite[Proposition 6.1]{Sh},
we have that
\[
\mrm{H}^0\left( \mrm{L}\Lambda_{\bar{\a}}(A) \right) \cong \Lambda_{\bar{\a}}(\mrm{H}^0(A)) \ncong 0.
\]
Hence, $\sup(\mrm{L}\Lambda_{\bar{\a}}(A)) = 0$,
and we deduce that 
\[
\mrm{H}^0(\mrm{L}\Lambda_{\bar{\a}}(A) \otimes^{\mrm{L}}_A \mrm{H}^0(A)) \ne 0,
\]
which implies that 
\[
\flatdim_A \left( \mrm{L}\Lambda_{\bar{\a}}(A) \right) \ge 0. 
\]
\end{proof}

\textbf{Acknowledgments.}
The author is grateful to the anonymous referee for comments and suggestions that helped improving this manuscript.

\end{document}